\title[Singular incompressible porous media equation]{On a singular incompressible porous media equation}
\date{\today}
\dedicatory{Dedicated to Professor Peter Constantin on the occasion of his 60th birthday.}
\author{Susan Friedlander}
\address{Department of Mathematics, University of Southern California, Los Angeles, CA
90089}
\email{susanfri@usc.edu}
\author{Francisco Gancedo}
\address{Departamento de An\'alisis Matem\'atico,
Universidad de Sevilla, Sevilla, Spain 41012} \email{fgancedo@us.es}
\author{Weiran Sun}
\address{Department of Mathematics,
The University of Chicago, Chicago, IL 60637}  \email{wrsun@math.uchicago.edu}
\author{Vlad Vicol}
\address{Department of Mathematics,
The University of Chicago, Chicago, IL 60637} \email{vicol@math.uchicago.edu}
\theoremstyle{plain}
\newtheorem{theorem}{Theorem}[section]
\newtheorem{definition}[theorem]{Definition}
\newtheorem{lemma}[theorem]{Lemma}
\newtheorem{proposition}[theorem]{Proposition}
\theoremstyle{definition}
\def\tilde{\widetilde}
\numberwithin{equation}{section}
\def\grad{\boldsymbol{\nabla}}
\renewcommand{\hat}{\widehat}
\def\ZZ{{\mathbb Z}}
\def\NN{{\mathbb N}}
\def\RR{{\mathbb R}}
\def\TT{{\mathbb T}}
\def\DD{\mathcal D}
\def\uu{\boldsymbol{u}}
\def\vv{\boldsymbol{v}}
\def\RB{\boldsymbol{R}}
\def\kk{\boldsymbol{k}}
\def\xx{\boldsymbol{x}}
\def\MM{\boldsymbol{M}}
\def\mm{\boldsymbol{m}}
\def\supp{\mathop {\rm supp} \nolimits}
\def\eps{\varepsilon}
\newcommand{\de}{\partial_{\eta}}
\newcommand{\dz}{\partial_{\zeta}}
\begin{document}


\begin{abstract}
This paper considers a family of active scalar equations with transport velocities which are more singular by a
derivative of order $\beta$ than the active scalar. We prove that the equations with $0<\beta\leq 2$ are
Lipschitz ill-posed for regular initial data. On the contrary, when $0<\beta<1$ we show local well-posedness for patch-type weak solutions.
\end{abstract}


%

\maketitle

\section{Introduction}\label{sec:intro}

In this paper we study a singularly modified version of the incompressible porous media equation.
We investigate the implications for the local well-posedness of the equations by modifying, with a fractional derivative, the constitutive relation between the scalar density and the convecting divergence free velocity vector. Our analysis is motivated by recent work~\cite{CCCGW} where it is shown that for the surface quasi-geostrophic equation such a singular modification of the constitutive law for the velocity, quite surprisingly still yields a locally well-posed problem. In contrast, for the singular active scalar equation discussed in this paper, local well-posedness does not hold for smooth solutions,
but it does hold for certain weak solutions.

The incompressible porous media (IPM) equation itself is derived via Darcy's law for the evolution of a flow in a porous medium. In two dimensions it is the active scalar equation for the density field $\rho(\xx,t)$
\begin{align}
& \partial_t \rho+\vv\cdot\grad\rho=0,\label{transportIPM}\\
& \grad \cdot \vv = 0, \label{incom}
\end{align}
where the incompressible velocity field $\vv$ is computed from $\rho$ and the pressure $P$ via Darcy's law~\cite{bear,nield}
\begin{align}
\frac{\mu}{\kappa} \vv=-\grad P-g(0,\rho).\label{darcyI}
\end{align}
Here $\mu$ is the viscosity of the fluid, $\kappa$ is the permeability of the medium, and $g$ is gravity.
For the sake of simplicity, we let $\mu=\kappa=g=1$. The equations are set in either $\RR^{2} \times [0,\infty)$ or $\TT^{2} \times [0,\infty)$. We observe that $\vv$ is determined from $\rho$ by a singular integral operator: using incompressibility~\eqref{incom}, the pressure is obtained from the density as $P = (-\Delta)^{-1}\partial_{x_2}\rho$, which combined with \eqref{darcyI} yields
\begin{align}
\vv=-\grad(-\Delta)^{-1}\partial_{x_2}\rho-(0,\rho) = \RB^{\perp} R_{1} \rho,\label{darcy}
\end{align}
where $\RB = (R_{1},R_{2})$ is the vector of Riesz transforms. There is a considerable body of literature concerning the IPM equation (i.e., \eqref{transportIPM} and \eqref{darcy}), which as well as describing an important physical process, gives a simple model that captures the non-local structure of variable density incompressible fluids~\cite{bear,DPR,nield}.

Active scalar equations that arise in fluid dynamics present a challenging set of problems in PDE. Maybe the best example is the surface quasi-geostrophic equation (SQG), introduced in the mathematical literature in~\cite{CMT}. Here the operator relating the velocity and the active scalar is also of order zero (as is the case for IPM). The SQG equation reads
\begin{align}
& \partial_t \theta +\uu\cdot\grad\theta=0, \label{transportQG}\\
& \uu = \RB^{\perp} \theta \label{uQG}.
\end{align}
For regular initial data, similar results have been proved for IPM and SQG~\cite{CCW,CW,DPR}, while for weak solutions one can find different outcomes~\cite{DDP,Resnick,Shvydkoy}, and for patch-type weak solutions the two systems present completely different behaviors~\cite{DP,G}. The global existence of smooth solutions remains open for both the IPM and SQG equation.

There is a significant difference between the SQG and IPM equations which we explore in this paper: the operator in \eqref{darcy} is {\em even}, while the analogous operator in \eqref{uQG} is {\em odd}. In a recent paper \cite{CCCGW}, the authors investigate what happens in a modified version of the SQG equations, when a (fractional) derivative loss in the map relating the scalar field and the velocity is included, i.e., instead of \eqref{uQG} one has $\uu = \Lambda^{\beta} \RB^{\perp} \theta$. Here $\beta > 0$ and $\Lambda = (-\Delta)^{1/2}$ is the Zygmund operator. It is shown in \cite{CCCGW} that since the divergence-free velocity $\uu$ is obtained from $\theta$ by the Fourier multiplier with symbol $i \kk^\perp |\kk|^{\beta-1}$, which is odd with respect to $\kk$, one obtains a crucial commutator term in the energy estimates. It then follows that the equations are locally well-posed in Sobolev spaces $H^s$ with $s\geq 4$. In this paper we consider a modified version of the IPM equation where the fractional derivative $\Lambda^{\beta}$ is inserted in the constitutive law \eqref{darcy}. More precisely we study the  {singular incompressible porous media} (SIPM) equation, which is given by
\begin{align}
& \partial_t \rho+ \vv \cdot\grad\rho=0,\label{transport}\\
&\vv=-\grad (-\Delta)^{-1}\partial_{x_2}\Lambda^\beta\rho-(0,\Lambda^\beta\rho) = \RB^\perp R_1 \Lambda^\beta \rho,\label{Sdarcy}
\end{align}
where $0< \beta \leq 2$. We observe several significant features of the operator in \eqref{Sdarcy}. It is a pseudo-differential operator of order $\beta$, which is inhomogenous with respect to the coordinates $x_{1}$ and $x_{2}$. Furthermore, it is an {\em even} operator, in the sense that its Fourier multiplier symbol, given explicitly as $- k_{1} \kk^{\perp} |\kk|^{\beta -2}$, is even with respect to the vector $\kk$. These features of the SIPM constitutive law \eqref{Sdarcy} lead to results that are in sharp contrast to those obtained for the singular SQG in \cite{CCCGW}. Considering \eqref{transport}--\eqref{Sdarcy} with $0<\beta \leq 2$ on the spatially periodic domain, we prove that for smooth initial data the equations are locally Lipschitz ill-posed in Sobolev spaces $H^{s}$ with $s >2$. In contrast, when $0<\beta < 1$, we prove local well-posedness for some {\em patch-type} weak solutions of \eqref{transport}--\eqref{Sdarcy}, on the full space. This dichotomy is a reflection of the subtle structure of the constitutive law \eqref{Sdarcy}. The even nature of the symbol relating the active scalar and the drift velocity also proved crucial in showing that for such active scalar equations $L^\infty$ weak solutions are not unique~\cite{DDP,Shvydkoy}.

In Section 2 we prove Lipschitz ill-posedness in Sobolev spaces for the SIPM equation \eqref{transport}--\eqref{Sdarcy} with $0 < \beta \leq 2$,  on $\TT^2 \times [0,\infty)$. The proof follows the lines of a similar result for another active scalar equation where the constitutive law is given by an even unbounded Fourier multiplier, namely the magnetogeostrophic equation (MG) studied in~\cite{FV2,M}. We use the techniques of continued fractions to construct a sequence of eigenfunctions for the operator obtained from linearizing the SIPM equation about a particular steady state. These $C^\infty$ smooth eigenfunctions have real unstable eigenvalues with arbitrarily large magnitudes. Once such eigenvalues are exhibited for the linearized equation the Lipschitz ill-posedness (in the sense that there is no solution semigroup that has Lipschitz dependence on the initial data) of the full nonlinear SIPM equation is proved using classical arguments (see, for example, \cite{FV2,R}). To emphasize that the crucial features of the operator in \eqref{Sdarcy} are that it is even and unbounded, we prove the Lipschitz ill-posedness of a general class of active scalar equations satisfying these properties.

In Section 3 we consider weak solutions of \eqref{transport}--\eqref{Sdarcy} when $0< \beta <1$, on $\RR^2 \times [0,\infty)$. We study solutions evolving from {\em patch-type} initial data
\begin{equation}\label{rhopatcht0}
\rho(\xx,0)=\left\{\begin{array}{cl}
                    \rho^1\quad\mbox{in}&D_0^1=\{\xx\in\RR^2: x_2>f_0(x_1)\}\\
                    \rho^2\quad\mbox{in}&D_0^2=\RR^2\setminus D^1_0,
                 \end{array}\right.
\end{equation}
where $\rho^2 > \rho^1$ are constants, and $f_0(x_1)$ is a smooth function. Patch-type solutions evolving from such initial data are a priori more singular, and they have to be considered in a weak sense (see Definition~\ref{dsd}). In particular, the velocity at points $(x_1,f_0(x_1))$ diverges to infinity. However, the evolution of these weak solutions can be reduced to a contour dynamics equation for the free boundary $f(x_1,t)$, with $f(x_1,0) = f_0(x_1)$. This permits us to follow the construction given in \cite{DPR} for the classical IPM equation ($\beta = 0$). We prove that for initial data of the form \eqref{rhopatcht0} the SIPM equation with $0<\beta<1$ has a unique local in time patch-type solution, with a corresponding smooth interface $f \in L^\infty(0,T; H^s (\RR))$, for $s \geq 4$ and $T>0$.

In the appendix we prove an abstract result concerning continued fractions, which is used in Section~\ref{sec:ill:posed}.

\section{Ill-posedness for active scalars with even unbounded constitutive laws}
\label{sec:ill:posed}

In order to obtain the Lipschitz ill-posedness for the SIPM equation, we first study the equation linearized about a certain steady state. We prove that the associated linear operator has unbounded unstable spectrum, which is the main ingredient in the proof of Theorem~\ref{thm:nonlinear}. In the last subsection we prove that ill-posedness in Sobolev spaces holds for a general class of active scalar equations for which the velocity is obtained from the scalar via an even unbounded Fourier multiplier.

\subsection{Linear ill-posedness in \texorpdfstring{$L^{2}$}{L2} for the SIPM equation for \texorpdfstring{$0< \beta < 2$}{0<beta<2}}
\label{sec:linear}
Consider a density given by
\begin{align*}
\rho(\xx,t)=\overline{\rho}(x_2)
\end{align*}
for a general function $\overline{\rho}:\TT\to\RR$. From \eqref{Sdarcy} we compute the steady state velocity as
\begin{align*}
\bar \vv=(0,\Lambda^\beta\rho)-(0,\Lambda^\beta\rho)=(0,0)
\end{align*}
and therefore we get a steady state of
\eqref{transport}--\eqref{Sdarcy}.

Associated to the steady state $\bar \rho$, one may define the
linear operator $L$ obtained by linearizing the nonlinear term in
\eqref{transport} about this steady state, namely
\begin{align}
  L \rho
  = - \bar \vv \cdot \grad \rho - \vv \cdot \grad \bar \rho
  = - v_2 \partial_2 \bar \rho
  = - R_1^2\Lambda^\beta \rho \; \partial_2 \bar \rho. \label{eq:L:def}
\end{align}
Using the method of continued fractions, see
also~\cite{FSV,FV2}, we shall prove next that the operator $L$ has
a sequence of eigenvalues with positive real part, which diverge to
$\infty$. This in turn implies that the linearized SIPM equation
\begin{align}
  & \partial_t \rho = L \rho \label{linear:SIMP}
\end{align}
is ill-posed from $H^s_x \mapsto L^\infty_t L^2_x$, for any $s\geq
0$. The singular features \eqref{linear:SIMP} shall be used in
Section ~\ref{sec:ill-posedness} to show that the full, nonlinear
SIPM equations are ill-posed in Sobolev spaces, by using a classical
perturbation argument (see, for instance \cite{R}). The following lemma is the key ingredient of the ill-posedness result.

\begin{lemma}\label{lemma:eigenvalue}
Fix an integer $a\geq 1$, let $s\geq 0$, and chose the steady state
$\bar \rho (x_2) = \sin(a x_2)$ of
\eqref{transport}--\eqref{Sdarcy}. For any integer $k \geq 1$, the
linear operator $L$ associated to $\bar \rho$, has a $H^s$
smooth eigenfunction $\rho_k(x_1,x_2)$, with $\Vert \rho_k
\Vert_{H^s}=1$, and corresponding eigenvalue $\lambda_k>0$ which
satisfies
\begin{align}
  \label{eq:lambda:bound}
  \frac{k^\beta}{C_a}
\leq
  \lambda_k
\leq
  \frac{C_a}{2-\beta} k^{1+\beta},
\end{align}
for some constant $C_a \geq 1$, which is independent of $k$.
\end{lemma}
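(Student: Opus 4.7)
The plan is to exploit two symmetries of $L$---translation invariance in $x_1$ and the fact that multiplication by $\partial_2\bar\rho = a\cos(ax_2)$ shifts $x_2$-Fourier frequencies by exactly $\pm a$---to reduce $L\rho=\lambda\rho$ to a symmetric three-term recurrence on $\ZZ$, and then to apply a continued-fraction/Rayleigh-quotient argument to produce a positive eigenvalue of the associated compact Jacobi operator.

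\emph{Reduction to a Jacobi operator.} I would search for an eigenfunction of the form $\rho_k(x_1,x_2)=\cos(kx_1)\phi_k(x_2)$ with $\phi_k(x_2)=\sum_{n\in\ZZ}c_n e^{inax_2}$. Using that $-R_1^2\Lambda^\beta$ has Fourier symbol $k_1^2|\kk|^{\beta-2}$, a direct computation identifies $L\rho_k=\lambda_k\rho_k$ with the recurrence
$$\mu_{n-1}c_{n-1}+\mu_{n+1}c_{n+1}=\nu c_n,\qquad \mu_n:=(k^2+a^2n^2)^{\beta/2-1},\qquad \nu:=\frac{2\lambda_k}{ak^2}.$$
The diagonal similarity $b_n:=\sqrt{\mu_n}\,c_n$ turns this into $Sb=\nu b$ for the symmetric Jacobi operator on $\ell^2(\ZZ)$ with off-diagonal entries $s_n:=\sqrt{\mu_n\mu_{n+1}}$. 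Since $0<\beta<2$ the weights $\mu_n$ decay like $|n|^{\beta-2}$, so $s_n\to 0$ and $S$ is compact, self-adjoint, with strictly positive off-diagonal entries.

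\emph{Positive eigenvalue and quantitative bounds.} Restricting to the even subspace $c_{-n}=c_n$, the boundary condition at $n=0$ reads $c_1/c_0=\nu/(2\mu_1)$, while for each $\nu>0$ the $\ell^2$-decaying solution at $+\infty$ is parameterized by an explicit continued fraction in the products $\mu_n\mu_{n+1}$; matching these at $n=0$ gives a transcendental equation whose positive root is supplied by the abstract continued-fraction statement proved in the appendix (equivalently, by a Perron-Frobenius argument applied to finite truncations of $S$ combined with monotonicity and compactness). For the lower bound I would test the Rayleigh quotient of $S$ against the trial vector $b=(\ldots,0,1,1,0,\ldots)$ supported on $n=0,1$:
$$\nu_k\geq\frac{\langle Sb,b\rangle}{\|b\|^2}=s_0=\sqrt{\mu_0\mu_1}\geq c_a\,k^{\beta-2},$$
since $\mu_1/\mu_0=(1+a^2/k^2)^{\beta/2-1}$ is uniformly bounded below for $k\geq 1$; multiplying by $ak^2/2$ yields $\lambda_k\geq k^\beta/C_a$. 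For the upper bound I would bound $\|S\|_{\ell^2\to\ell^2}$ by a Schur-type estimate combined with a tail comparison of $\sum_n\mu_n$ against an integral, whose behavior as $\beta\uparrow 2$ produces the factor $(2-\beta)^{-1}$; multiplying back by $ak^2/2$ gives $\lambda_k\leq C_a k^{1+\beta}/(2-\beta)$.

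\emph{Smoothness, normalization, and main obstacle.} Since $\nu_k$ is fixed and positive while $s_n\to 0$, the recurrence is sub-threshold for all $|n|$ large, which forces exponential decay of the eigenvector $(b_n)$ and hence super-polynomial decay of $(c_n)=(b_n)/\sqrt{\mu_n}$; therefore $\phi_k\in C^\infty(\TT)$, $\rho_k\in H^s(\TT^2)$ for every $s\geq 0$, and a positive scalar rescaling normalizes $\|\rho_k\|_{H^s}=1$. I expect the main obstacle to be the construction in Step 2, namely exhibiting a genuine positive eigenvalue of the infinite-dimensional Jacobi operator $S$ together with a square-summable eigenvector: the non-self-adjointness of the original operator $L$ means the spectral theory must be transported through the similarity $b_n=\sqrt{\mu_n}\,c_n$, and the matching of the continued fraction at $n=0$ with the boundary condition from the even symmetry must be shown to admit a real positive root, which is precisely the role played by the abstract continued-fraction lemma isolated in the appendix.
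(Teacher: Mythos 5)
Your reduction is correct and your strategy works, but it is a genuinely different route from the paper's. The paper expands in $\sin(kx_1)\sum_{n\geq 1}c_n\sin(nax_2)$ (the odd sector in $x_2$, so there is no $c_0$ and the bottom relation is $\lambda c_1+c_2/p_2=0$), converts the three-term recurrence into the continued-fraction characteristic equation \eqref{therecursion}, and invokes Theorem~\ref{thm:*} to produce a positive root with the two-sided bound $1/\sqrt{p_1p_2}<\lambda_k<1/\sqrt{p_1p_2-p_1^2}$, from which \eqref{eq:lambda:bound} follows by substituting $p_1,p_2$ from \eqref{pn}. You instead symmetrize the recurrence into a compact self-adjoint Jacobi operator $S$ with off-diagonal entries tending to zero and extract the top eigenvalue by spectral theory; the Rayleigh quotient and a Schur test then give the two sides of \eqref{eq:lambda:bound} (the Schur test actually yields the sharper bound $\lambda_k\leq a k^{\beta}$, so no integral comparison of $\sum_n\mu_n$ and no $(2-\beta)^{-1}$ factor are needed --- your account of where that factor comes from is off, but harmlessly so, since only an upper bound is claimed). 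Your observation that the eigenvector of $S$ decays super-exponentially because the recurrence is sub-threshold for large $|n|$ is exactly what is needed to undo the unbounded similarity $c_n=b_n/\sqrt{\mu_n}$ and obtain smoothness. Three caveats. First, your boundary condition $2\mu_1c_1=\nu c_0$ comes from the even sector, so your matching equation is not the one treated by Theorem~\ref{thm:*}; the appendix result does not apply verbatim, and you should rely on your self-contained self-adjoint alternative ($S$ anticommutes with conjugation by $(-1)^n$, so its spectrum is symmetric and $\|S\|$ is a positive eigenvalue, with a Perron--Frobenius argument placing the top eigenvector in the even sector). Second, the test vector $(\dots,0,1,1,0,\dots)$ is not even; use $e_{-1}+e_0+e_1$, or work on all of $\ell^2(\ZZ)$ and then argue evenness of the top eigenvector --- this only changes constants. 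Third, the paper's continued-fraction route also delivers the quantitative estimate \eqref{eq:Sobolev:Estimate}, hence the lower bound \eqref{eq:L2:Hs:bound} on $\|\rho_k\|_{L^2}$ after $H^s$-normalization, which is used later in the proof of Theorem~\ref{thm:nonlinear}; your route proves the lemma as stated, but you would need to extract an analogous $\|\rho_k\|_{L^2}\gtrsim k^{-s}$ bound from your decay estimates to feed the nonlinear argument.
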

\begin{proof}[Proof of Lemma~\ref{lemma:eigenvalue}]
Following the arguments~\cite{FV2}, we prove the lemma by explicitly
constructing an eigenfunction $\rho$, with associated eigenvalue
$\lambda$, i.e. a solution of
\begin{align}
  L \rho
  = -a \cos(a x_2) R_1^2 \Lambda^\beta \rho
  = \lambda \rho, \label{eigenvalue:problem}
\end{align}
where we make the ansatz that $\rho$ is given explicitly by the Fourier series
\begin{align}
  \label{ansatz}
  \rho(x_1,x_2)
  = \sin(k x_1) \sum_{n\geq 1} c_n \sin(n a x_2) \,,
\end{align}
where $c_n \neq 0$ for all $n \geq 1$. Note that given $\lambda$ and
$c_1$, one may solve \eqref{eigenvalue:problem}--\eqref{ansatz} for
all $c_n$, with $n\geq 2$, but only for suitable values of $\lambda$
do these $c_n$'s converge sufficiently fast to $0$ as $n\rightarrow
\infty$.

Inserting the ansatz \eqref{ansatz} into \eqref{eigenvalue:problem},
and matching the terms with same oscillation frequency, one obtains
that recursion relation
\begin{align}
  &\lambda c_1 + \frac{c_2}{p_2} = 0,
     \mbox{ when } n=1, \label{rec:1}  \\
  &\lambda c_n + \frac{c_{n+1}}{p_{n+1}} + \frac{c_{n-1}}{p_{n-1}} = 0,
     \mbox{ for all } n \geq 2, \label{rec:2}
\end{align}
where for all $n\geq 1$ we have denoted
\begin{align}
  \label{pn}
  p_n = \frac{2(k^2 + n^2 a^2)^{1-\beta/2}}{a k^2}.
\end{align}
Note that $p_n$ grow unboundedly as $n\rightarrow \infty$, whenever
$\beta<2$, and they are monotonically increasing. To solve \eqref{rec:1}--\eqref{rec:2} it is standard to
introduce
$
\eta_n = (c_n p_{n-1})/(c_{n-1} p_n),
$
which solves
\begin{align}
  &\lambda p_1 + \eta_2 = 0,
     \mbox{ when } n=1, \label{eq:rec:1}  \\
  &\lambda p_n + \eta_{n+1} + \frac{1}{\eta_n} = 0,
     \mbox{ for all } n \geq 2. \label{eq:rec:2}
\end{align}
Note that if $\lambda$ is known, the recursion
\eqref{eq:rec:1}--\eqref{eq:rec:2} may be used to determine the
values of $\{\eta_n\}_{n\geq 2}$, and also the sequence $\{c_n\}$ by setting $c_1=p_1$ and
\begin{align}
  c_n = p_n \eta_n \ldots \eta_2 \label{cn:def}
\end{align}
for all $n\geq 2$. The compatibility of
\eqref{eq:rec:1} and \eqref{eq:rec:2} requires that $\lambda$ is
given by a root of the characteristic equation
\begin{align}
  \lambda p_1
  = \frac{1}{ \lambda p_2 - \frac{1}{ \lambda p_3 - \frac{1}{ \lambda p_4 - \dots }}}. \label{therecursion}
\end{align}
To solve the characteristic equation \eqref{therecursion} we appeal to the following abstract result about continuous fractions, whose proof we give in Appendix~\ref{app:fractions} below.
\begin{theorem} \label{thm:*}
Assume that the sequence of real numbers $\{p_n\}_{n\geq 1}$ satisfies
\begin{align}
 0 < p_n < p_{n+1}  \label{p:cond:i}
\end{align}
for any $n \geq 1$, and the $p_n$'s are unbounded, that is
\begin{align}
 \lim_{n\to \infty} p_n = \infty.  \label{p:cond:ii}
\end{align}
 Then there exists a real positive solution $\lambda_*$ of \eqref{therecursion}, such that
 \begin{align}
 \frac{1}{\sqrt{p_1 p_2}} < \lambda_* <  \frac{1}{\sqrt{p_1 p_2 - p_1^2}} \label{eq:lambda*:bounds}
 \end{align}
 holds, and the sequence $\{ c_n \}_{n\geq 2}$ defined by \eqref{cn:def} decays exponentially fast for large enough $n$ and we have the estimate
\begin{align}
\| n^s c_n \|_{\ell^2(\NN)} \leq C \left( |n_0|^{s} + \frac{p_2}{p_1} \right)\; \| c_n \|_{\ell^2(\NN)} \label{eq:Sobolev:Estimate}
\end{align}
where $n_0$ is the largest  integer such that $p_{n_0} \leq 4 p_2$, and $C>0$ is a constant.
\end{theorem}

Since the sequence $\{p_n\}$ defined in \eqref{pn} above is monotonically increasing and unbounded when $0<\beta<2$, we may apply  Theorem~\ref{thm:*}, and obtain the existence of a solution $\lambda_k$ to \eqref{therecursion}, which satisfies
\begin{align}
\frac{1}{\sqrt{p_1 p_2}} < \lambda_k < \frac{1}{\sqrt{p_1 p_2-p_1^2}} \label{eq:lambda:b:bound}.
\end{align}
Note that in addition to the existence of $\lambda_k$, Theorem~\ref{thm:*} also guarantees that the coefficients $\{c_n\}$ constructed via \eqref{cn:def} decay exponentially fast after large enough $n$, so that the function constructed in \eqref{ansatz} is smooth, and in particular lies in $H^s$. Moreover, letting
\begin{align*}
C_{s,k}^2 = \sum_{n\geq 1} n^{2s} c_n^2 < \infty
\end{align*}
 we may divide the $c_n$'s by $C_{s,k}$, and define
\begin{align*}
\rho_k(x_1,x_2) = \frac{\rho(x_1,x_2)}{C_{s,k}}
\end{align*}
which is still a smooth eigenfunction of $L$ with eigenvalue $\lambda_k$, and is normalized to have unit $H^s$ norm. Under this normalization, in view of \eqref{eq:Sobolev:Estimate} we may also estimate the $L^2$ norm of $\rho_k$
\begin{align}
\| \rho_k\|_{L^2} = \frac{1}{C_{s,k}} \| c_n \|_{\ell^2(\NN)} \geq \frac{1}{C ( n_0^{s} + p_2/p_1) } \frac{\| n^s c_n \|_{\ell^2(\NN)}}{C_{s,k}} = \frac{1}{C (n_0^{s}+p_2/p_1)} \geq \frac{1}{C_{a,s} (1 + k^s)}, \label{eq:L2:Hs:bound}
\end{align}
where $C_{a,s}$ is a positive constant which depends only on $a$ and $s$. Above we have used that $n_0$ is the largest number such that $p_{n_0} \leq 4 p_1$, which in view of \eqref{pn} may be computed explicitly ($ n_0 \approx 2 k/a$), and $p_2/p_1$ is uniformly bounded in $k$.

To conclude the proof of the lemma it is only left to verify that \eqref{eq:lambda:bound} holds. Inserting the exact form of $p_1$ and $p_2$ from \eqref{pn} into the estimate \eqref{eq:lambda:b:bound}, yields the existence of a positive constant $C_a$ such that \eqref{eq:lambda:bound} holds, thereby concluding the proof of the lemma.
\end{proof}

\subsection{Linear ill-posedness for the SIPM equation when \texorpdfstring{$\beta = 2$}{beta=2}}
\label{sec:beta=2}

When $\beta = 2$ the argument in Section~\ref{sec:linear} does not apply directly  since the corresponding $p_n$ defined by \eqref{pn} are not growing unboundedly. However the linear ill-posedness still holds, as the equation is even more {\em singular}. In fact the argument given here works for $\beta < 4$. Here we do not  construct an explicit eigenvalue of the linearized operator, but instead give a lower bound for the solution at time $t$.

Again we linearize \eqref{transport}--\eqref{Sdarcy} around
$\bar\rho=\sin (a x_2)$. The associated linearized operator is
$L \rho = -a \cos(a x_2) R_1^2 \Lambda^2 \rho$.
To show the linear instability of $\partial_t \rho = L \rho$,  define another linear operator
\begin{align*}
  \tilde{L}\rho = -a \cos(a x_2) R_1^2 \Lambda \rho
\end{align*}
which corresponds to the operator of  \eqref{eq:L:def} with $\beta=1$.
Let $(\lambda_{k}, \tilde \rho_{k})$ be the corresponding sequence of
eigen-pairs of $\tilde{L}$ as constructed in Section \ref{sec:linear}.
Then there exists $C_a$ such that $k C_a^{-1} \leq \lambda_{k}
\leq C_a k^2$. By the definition of $\tilde \rho_{k}$ in
\eqref{ansatz}, we have $\int_{{\TT}^2} \tilde \rho_{k} = 0$.
We now define
\begin{align*}
  \rho_k
  = \Lambda^{-1} \tilde\rho_{k}
  = \sin(k x_1) \sum_{n\geq 1} \frac{\tilde c_{n}}{n} \sin(n a x_2) \,,
\end{align*}
Then
\begin{align*}
     L \rho_k
  = -a \cos(a x_2) R_1^2 \Lambda \tilde \rho_{k}
  = \lambda_{k} \tilde\rho_{k}
  = \lambda_{k} \Lambda\rho_{k}.
\end{align*}
Therefore, the unique solution of
\begin{align*}
   \partial_t \rho = L \rho \,,
\qquad
  \rho(0, x) = \rho_k(x) \,,
\end{align*}
is
$
\rho(t, x) = e^{t \lambda_{k} \Lambda} \rho_k(x),
$
which in turn shows that
\begin{align*}
 \| \rho (\cdot,t)\|_{L^2} \geq e^{t \lambda_k} \|\rho_k\|_{L^2},
\end{align*}
since $\rho_k$ has zero mean on $\TT^2$. Hence  $\| \rho (\cdot,t)\|_{L^2} \geq e^{t \lambda_k} \|\rho(\cdot,0)\|_{L^2}$, and since $\lambda_k$ can be made arbitrarily large by sending $k \to \infty$, it follows that the linearized equations are ill-posed in the $L^2$ norm (in the sense that there is no continuous semigroup at $t=0$).

\subsection{Nonlinear ill-posedness in \texorpdfstring{$H^{s}$}{Hs} for the SIPM equations}
\label{sec:ill-posedness}
We recall cf.~\cite{FV2} the
definition of Lipschitz local well-posedness:
\begin{definition}\label{def:well}
Let $Y \subset X \subset L^2$ be Banach spaces. The initial value problem
for the SIPM equation \eqref{transport}--\eqref{Sdarcy} is called locally
Lipschitz $(X,Y)$ well-posed, if there exist continuous functions
$T: [0,\infty)^{2} \rightarrow (0,\infty)$ non-increasing (with respect to both variables), and $K: [0,\infty)^{2} \rightarrow (0,\infty)$ non-decreasing, so that for every pair
of initial data $\rho^{(1)}_0, \rho^{(2)}_0 \in Y$ there exist
unique solutions $\rho^{(1)}, \rho^{(2)} \in L^{\infty}(0,T;X)$ of
the initial value problem associated to
\eqref{transport}--\eqref{Sdarcy}, that satisfy
\begin{align}
  \Vert \rho^{(1)}(\cdot,t) -\rho^{(2)}(\cdot,t) \Vert_{X}
\leq
  K \Vert \rho^{(1)}_0 - \rho^{(2)}_0 \Vert_{Y}\label{eq:def:well}
\end{align}
for every $t\in [0,T]$. Here $T = T (\Vert \rho^{(1)}_0 \Vert_{Y},\Vert \rho^{(2)}_0 \Vert_{Y})$ and $K= K(\Vert \rho^{(1)}_0 \Vert_{Y},\Vert \rho^{(2)}_0 \Vert_{Y})$.
\end{definition}
The Banach spaces $X,Y$ considered here are $X = H^r$ and $Y=H^s$, with $r\geq 0$ and $s\geq r+1$. Indeed, if $r \leq s <
r+1$, the Lipschitz $(H^r,H^s)$ well-posedness of first order
equations should in general not even be expected, due to the
derivative loss in the non-linearity. The main theorem of this section is:
\begin{theorem}\label{thm:nonlinear}
The SIPM equations, with $0<\beta < 2$, are locally Lipschitz $(H^r,H^s)$ ill-posed, for any $r > 2$ and $s\geq r+1$, in the sense of
Definition~\ref{def:well} above.
\end{theorem}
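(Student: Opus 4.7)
The proof follows the classical perturbation strategy of Renardy~\cite{R} and Friedlander-Vicol~\cite{FV2}: from the unbounded unstable spectrum of the linearization established in Lemma~\ref{lemma:eigenvalue}, one derives a contradiction with the hypothesized Lipschitz well-posedness.

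Argue by contradiction. Assume the SIPM equations are locally Lipschitz $(H^r,H^s)$ well-posed. Fix $\bar\rho(x_2)=\sin(x_2)$ and, for each $k\geq 1$, let $(\rho_k,\lambda_k)$ be the eigenpair produced by Lemma~\ref{lemma:eigenvalue}, with $\|\rho_k\|_{H^s}=1$, $\lambda_k\geq k^\beta/C_a$, and $\|\rho_k\|_{L^2}\geq (C_{a,s}(1+k^s))^{-1}$. For a small fixed $\epsilon>0$, consider the two initial data $\rho_0=\bar\rho$ and $\rho_0^{k,\epsilon}=\bar\rho+\epsilon\rho_k$, whose $H^s$ norms are uniformly bounded in $k$ and $\epsilon$. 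The well-posedness hypothesis yields a time $T>0$ and a constant $K>0$, both independent of $k$ and $\epsilon$, together with unique solutions $\rho^{k,\epsilon}\in L^\infty(0,T;H^s)$, such that the difference $w_{k,\epsilon}:=\rho^{k,\epsilon}-\bar\rho$ solves $\partial_t w_{k,\epsilon} = L w_{k,\epsilon} - \vv[w_{k,\epsilon}]\cdot\grad w_{k,\epsilon}$ with $w_{k,\epsilon}(0)=\epsilon\rho_k$ and satisfies
\begin{align*}
\|w_{k,\epsilon}(t)\|_{H^r} \leq K\epsilon, \qquad t\in[0,T].
\end{align*}

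Next, rescale $\tilde w_{k,\epsilon}=\epsilon^{-1}w_{k,\epsilon}$, which obeys $\|\tilde w_{k,\epsilon}\|_{L^\infty(0,T;H^r)}\leq K$ and
\begin{align*}
\partial_t \tilde w_{k,\epsilon} = L\tilde w_{k,\epsilon} - \epsilon \, \vv[\tilde w_{k,\epsilon}]\cdot\grad \tilde w_{k,\epsilon}, \qquad \tilde w_{k,\epsilon}(0)=\rho_k.
\end{align*}
Since $r>2$ yields the embedding $H^r(\TT^2)\hookrightarrow W^{1,\infty}$, and since the assumption $s\geq r+1$ combined with (standard) persistence of $H^s$ regularity controls $\vv[\tilde w_{k,\epsilon}]=\RB^\perp R_1\Lambda^\beta \tilde w_{k,\epsilon}$ in $L^\infty$ uniformly in $k$ and $\epsilon$, the nonlinear term is $O(\epsilon)$ in $L^2$. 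An Aubin-Lions compactness argument combined with the uniform $H^r$ bound then produces, for fixed $k$ and along some subsequence $\epsilon_n\to 0$, a limit $\bar u_k \in L^\infty(0,T;H^r)$ solving the linearized equation $\partial_t \bar u_k=L\bar u_k$ with $\bar u_k(0)=\rho_k$ and $\|\bar u_k\|_{L^\infty(0,T;H^r)}\leq K$. Projecting onto the invariant Fourier sector $\{\sin(kx_1)G(x_2):G\in L^2(\TT)\}$, on which $L$ reduces (for $\beta<2$) to a bounded operator for which $\rho_k$ is an eigenvector with eigenvalue $\lambda_k$, identifies the limit as $\bar u_k(t)=e^{\lambda_k t}\rho_k$.

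Evaluating at $t=T/2$ one obtains
\begin{align*}
K \geq \|\bar u_k(T/2)\|_{H^r} \geq \|\bar u_k(T/2)\|_{L^2} = e^{\lambda_k T/2}\|\rho_k\|_{L^2} \geq \frac{e^{\lambda_k T/2}}{C_{a,s}(1+k^s)}.
\end{align*}
Since $\lambda_k \geq k^\beta/C_a$ diverges as $k\to\infty$ while the denominator grows only polynomially in $k$, the right-hand side exceeds $K$ for $k$ sufficiently large, producing the desired contradiction. The chief technical obstacle is the justification of the limit $\epsilon_n\to 0$ and the identification of the weak limit with the explicit eigenmode $e^{\lambda_k t}\rho_k$: one must tame the $\Lambda^\beta$ derivative loss in the nonlinear drift (which is precisely where the hypotheses $r>2$ and $s\geq r+1$ enter, through Sobolev embedding and persistence of regularity) and exploit the invariance of the Fourier sector $\sin(kx_1)\cdot L^2_{x_2}$ under $L$ to bypass the lack of a global semigroup generated by the unstable operator $L$.
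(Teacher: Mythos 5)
Your proposal is correct and follows essentially the same route as the paper: perturb the steady state $\bar\rho=\sin(ax_2)$ by $\epsilon\rho_k$, use the hypothesized Lipschitz bound to control $(\rho^\epsilon-\bar\rho)/\epsilon$ uniformly in $\epsilon$, pass to the limit via Aubin--Lions to recover the linearized evolution, identify the limit as $e^{\lambda_k t}\rho_k$ through uniqueness for the linear problem (your invariant Fourier-sector argument is the same mechanism as the paper's Proposition~\ref{prop:unique}), and contradict the uniform bound using \eqref{eq:L2:Hs:bound}. The only slip is minor: to bound the nonlinearity in $L^2$ one should pair $\Lambda^\beta w\in L^2$ with $\grad w\in L^\infty$ (which needs only $w\in H^r$, $r>2$, as in \eqref{eq:N:bound}), rather than placing the velocity in $L^\infty$ --- the latter is not guaranteed for $\beta$ close to $2$, and no persistence of $H^s$ regularity is available or needed.
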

The main idea of the proof of Theorem~\ref{thm:nonlinear} is to let
$\rho^{(1)}_0$ be the steady state $\bar \rho(x_2) = \sin(a x_2)$, for
some fixed $a\geq 1$, and  $\rho^{(2)}_0 = \bar \rho + \epsilon
\rho_k$, where $k$ is chosen to depend on the Lipschitz constant $K$
and the time of existence $T$, in such a way that $K < 2
\exp(k^\beta T)$. Letting $\epsilon \rightarrow 0$ it will follow
that the linear equation \eqref{linear:SIMP} should be Lipschitz
$(X,L^2)$ well-posed with the same Lipschitz constant $K$, on
$[0,T)$, which gives rise to a contradiction due to the choice of
$k$. In order to implement this program we need to show uniqueness of solutions to the linearized SIPM equations.

\begin{proposition}\label{prop:unique}
Let $\rho \in L^{\infty}(0,T;L^{2}(\TT^{2}))$ be a solution of the initial value problem
\begin{align}
 \partial_t \rho =   L \rho, \qquad \rho(\cdot,0)=0 \label{eq:Linear}
\end{align}
where as before the linear operator $L$ is defined as $ L\rho = -a \cos(a x_2) R_1^2 \Lambda^\beta \rho$, with $a \in \ZZ$, and $0<\beta \leq 2$.
Then for any $t\in(0,T)$ we have $\rho(\cdot,t)=0$.
\end{proposition}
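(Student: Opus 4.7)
The plan is to diagonalize the linearized equation in the $x_1$-Fourier variable and reduce uniqueness to an ODE question on $L^2(\TT)$ with a bounded generator. For each $k_1 \in \ZZ$, let $P_{k_1}$ denote the projection onto the $k_1$-th $x_1$-Fourier mode, so that $P_{k_1} \rho$ has the form $\rho_{k_1}(x_2, t) e^{i k_1 x_1}$ for some $\rho_{k_1}(\cdot, t) \in L^2(\TT)$. Since $R_1^2 \Lambda^\beta$ is the Fourier multiplier with symbol $-k_1^2 |k|^{\beta - 2}$ (a function of the full wavenumber that preserves the $k_1$ component), and since multiplication by $a \cos(a x_2)$ acts only in the $x_2$ variable, both factors of $L$ commute with $P_{k_1}$. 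Thus each $\rho_{k_1}$ satisfies the same linear equation $\partial_t \rho_{k_1} = L \rho_{k_1}$, now posed on the lower-dimensional space $L^2(\TT)$, with $\rho_{k_1}(\cdot, 0) = 0$. The mode $k_1 = 0$ is handled immediately, since the symbol vanishes identically and $\rho_0$ is constant in time.

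Next I would estimate the operator norm of $L$ on a single $x_1$-Fourier mode. On functions supported at $x_1$-frequency $k_1 \neq 0$, the symbol $-k_1^2 (k_1^2 + k_2^2)^{(\beta - 2)/2}$ attains its supremum in $k_2 \in \ZZ$ at $k_2 = 0$ (the exponent being nonpositive for $\beta \leq 2$), giving a bound of $|k_1|^\beta$. Combined with $\|a \cos(a x_2)\|_{L^\infty} \leq a$, Plancherel in $x_2$ yields
\begin{align*}
\|L \rho_{k_1}(\cdot, t)\|_{L^2_{x_2}} \leq a |k_1|^\beta \|\rho_{k_1}(\cdot, t)\|_{L^2_{x_2}}.
\end{align*}
Thus $\rho_{k_1}$ solves a linear ODE in the Hilbert space $L^2(\TT)$ whose generator has norm at most $a |k_1|^\beta$. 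A direct Gronwall argument on $\|\rho_{k_1}(\cdot, t)\|_{L^2_{x_2}}^2$ forces $\rho_{k_1} \equiv 0$ on $[0, T]$ for every $k_1$, and Parseval's identity then gives $\rho \equiv 0$.

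The one mildly technical step is to rigorously descend from the distributional identity $\partial_t \rho = L \rho$ to a sufficiently strong statement for each $\rho_{k_1}$. Testing the PDE against functions of the form $e^{-i k_1 x_1} \varphi(x_2) \chi(t)$ with $\varphi \in C^\infty(\TT)$ and $\chi \in C^\infty_c(0, T)$ shows that $\rho_{k_1}$ solves the equation as an $L^2(\TT)$-valued distribution, and the mode-by-mode bound above then upgrades this to $\rho_{k_1} \in W^{1, \infty}(0, T; L^2(\TT))$, which is enough to run Gronwall. Notably, the argument is insensitive to the sign of $\cos(a x_2)$, which is exactly why it applies uniformly for every $0 < \beta \leq 2$, including the backward-parabolic endpoint $\beta = 2$ at which $L$ becomes a genuine second-order differential operator.
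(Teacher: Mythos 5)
Your proposal is correct and follows essentially the same route as the paper: decompose into $x_1$-Fourier modes (which $L$ preserves, since $\cos(ax_2)$ acts only in $x_2$), bound the multiplier $k_1^2|\kk|^{\beta-2}$ by $|k_1|^\beta$ using $\beta\leq 2$, and run Gronwall on the $L^2_{x_2}$ norm of each mode with zero initial data. The only cosmetic difference is that the paper writes the multiplication by $a\cos(ax_2)$ explicitly as a shift by $\pm a$ in $k_2$ and estimates the resulting sum by Cauchy--Schwarz, whereas you invoke the $L^\infty$ bound on $\cos$ directly; you also spell out the distributional-to-$W^{1,\infty}(0,T;L^2)$ upgrade that the paper leaves implicit.
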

\begin{proof}[Proof of Proposition~\ref{prop:unique}]
We write $\rho$ in terms of its Fourier series as $\rho(\xx,t) = \sum_{\kk \in \ZZ^2} \hat\rho (\kk,t) \exp (i \kk \cdot \xx)$,
and for each $k_1 \in \ZZ$ define
\begin{align*}
 [\rho(k_1,t)]^2 := \sum_{k_2 \in \ZZ} |\hat \rho(k_1,k_2,t) |^2
\end{align*}
which is finite for each $k_1 \in \ZZ$ and $t\in (0,T)$ by the assumption $\rho \in L^\infty(0,T;L^2)$.
Taking the Fourier transform of \eqref{eq:Linear}, and using $a \cos(a x_2) = a (e^{iax_2} + e^{-iax_2})/2$, we obtain that
\begin{align*}
\partial_t [\rho(k_1,t)]^2 
&\leq a |k_1|^\beta \sum_{k_2 \in \ZZ} \left(|\hat \rho(k_1,k_2-a,t)| + |\hat \rho(k_1,k_2-a,t)| \right) |\hat \rho(k_1,k_2,t)| \leq 2 a |k_1|^2 [\rho(k_1,t)]^2
\end{align*}
for $\beta \leq 2$. The proof of the proposition is concluded since $[\rho(k_1,0)]=0$ for each $k_1 \in \ZZ$, as $\rho(\cdot,0)=0$.
\end{proof}

Having established the uniqueness of solutions to the linearized equation, we now give the proof of the nonlinear ill-posedness result.

\begin{proof}[Proof of Theorem~\ref{thm:nonlinear}]
Fix throughout this proof $a\geq 1$ and $\bar \rho(x_{2}) = \sin (ax_{2})$ a steady state of \eqref{transport}--\eqref{Sdarcy}.
Since $r\geq 2$, by assumption $H^r$ is continuously embedded in $H^\beta$ (for any $\beta \in (0,2]$), and the linear operator
\begin{align*}
L \rho = - R_1^2 \Lambda^\beta \rho\; \partial_2 \bar \rho
\end{align*}
maps $X$ continuously into $L^2$. In addition, the nonlinearity
\begin{align*}
N \rho = \RB^\perp R_1 \Lambda^\beta \rho \cdot \grad \rho
\end{align*}
may be bounded as
\begin{align}
 \| N\rho \|_{L^2}\leq C \|\Lambda^\beta \rho\|_{L^2}  \|\grad \rho\|_{L^\infty} \leq C \| \rho\|_{H^r}^{2} \label{eq:N:bound}
\end{align}
for some constant $C>0$, since in two dimensions $H^{r-1} \subset
L^\infty$ for $r>2$. Assume ad absurdum that the SIPM equations are
locally Lipschitz $(X,Y)$ well-posed in the sense of
Definition~\ref{def:well}.

We let $\rho_{0}^{(1)}(x) = \bar \rho(x_2)$, so that
$\rho^{(1)}(x,t) =\bar\rho(x_2)$ is the unique solution in $H^r$ of
\eqref{transport}--\eqref{Sdarcy} with initial data $\rho_0^{(1)}$.
Denote $\| \bar \rho\|_{H^s}$ for simplicity by $\bar C$. Let $\psi_0
\in H^s$ be a smooth function, to be chosen precisely later, such that
$\|\psi_0\|_{H^s}=1$. For each $\eps \in (0,\bar C]$ we may define
$\rho_0^{(2)}(x) = \rho_0^\eps(x)= \bar \rho(x_2) + \eps \psi_0(x)
\in H^s$, and we denote the unique solution in $H^r$ of
\eqref{transport}--\eqref{Sdarcy} with initial data $\rho_0^{\eps}$
by $\rho^\eps$ (instead of $\rho^{(2,\eps)}$). By
Definition~\ref{def:well} there exits a time $T_\eps = T_\eps( \bar
C , \|\rho_0^\eps\|_{H^s})$ and a Lipschitz constant $K_\eps = K_\eps(
\bar C, \| \rho_0^\eps\|_{H^s})$, such that we have
\begin{align*}
\sup_{[0,T_\eps]} \| \rho^\eps(\cdot,t) - \bar \rho(\cdot) \|_{H^r} \leq K_\eps \| \rho_0^\eps - \bar \rho\|_{H^s} = K_\eps\;  \eps
\end{align*}
since $\| \psi_0\|_{H^s} = 1$. Note that $\| \rho_0^\eps\|_{H^s} \leq \|
\bar \rho \|_{H^s} +\eps \leq 2 \bar C$ for all $\eps \in (0,\bar C]$,
and hence due to our assumptions on the functions $T(\cdot,\cdot)$
and $K(\cdot,\cdot)$, there exists a time of existence $\bar T> 0$
and a Lipschitz constant $\bar K > 0$ such that we have
\begin{align}
\sup_{[0,\bar T]} \| \rho^\eps(\cdot,t) - \bar \rho(\cdot) \|_{H^r} \leq \bar K \eps \label{eq:Lip:bound}
\end{align}
for any $\eps \in (0,\bar C]$. That is, $T_\eps$ and $K_\eps$ may be chosen independently on $\eps$.

In view of the definition of $\rho_0^\eps$, we have that $\psi_0 =
(\rho_0^\eps - \bar \rho)/\eps$, and we may write the solution
$\rho^\eps$ as an ${\mathcal O}(\eps)$ perturbation of $\bar \rho$,
i.e.
\begin{align*}
\psi^\eps = \frac{\rho^\eps - \bar \rho}{\eps}.
\end{align*}
It follows from \eqref{eq:Lip:bound} that $\{ \psi^\eps\}_{\eps}$ is
uniformly bounded in $L^\infty(0,\bar T; H^r)$ by $\bar K$ and
$\psi^\eps$ is a solution of
\begin{align}
\partial_t \psi^\eps  = L \psi_\eps + \eps N \psi^\eps , \qquad \psi^\eps(\cdot,0) = \psi_0. \label{eq:PSI:def}
\end{align}
By \eqref{eq:N:bound} we infer that
\begin{align}
\| N \psi^\eps\|_{L^2} \leq C \|\psi^\eps\|_{H^r}^2 \leq C \bar K^2 \label{eq:N:bound:eps}
\end{align}
on $[0,\bar T]$, and hence since $H^r \subset H^\beta$ we infer from \eqref{eq:PSI:def} that $\{ \partial_t \psi^\eps\}_{\eps}$ is uniformly bounded in $L^\infty(0,\bar T; L^2)$, by $C \bar K^2 + \bar K \| \partial_2 \bar \rho\|_{L^\infty}$. Therefore, by the classical Aubin-Lions compactness lemma we obtain that the weak-$*$ limit $\psi \in L^\infty(0,\bar T;H^r)$ is such that $\psi^\eps \to \psi$ strongly in the $L^2$ norm. But sending $\eps \to 0$ in \eqref{eq:PSI:def}, by using \eqref{eq:N:bound:eps} we obtain that $\psi$ is the unique solution of the initial value problem
\begin{align}
\partial_t \psi = L\psi, \qquad \psi(\cdot,0) = \psi_0,\label{eq:PSI:eq}
\end{align}
and satisfies
\begin{align}
\sup_{[0,\bar T]} \| \psi(\cdot,t)\|_{L^2} \leq \bar K. \label{eq:psi:bound}
\end{align}
Uniqueness follows from  Proposition~\ref{prop:unique} above, since \eqref{eq:PSI:eq} is a linear problem.

The proof of the theorem is now concluded by carefully choosing the initial data $\psi_0 \in H^s$ of \eqref{eq:PSI:eq}, in terms of $\bar T$ and $\bar K$. More precisely, by Lemma~\ref{lemma:eigenvalue}, for any $k \geq 1$ we may find a smooth eigenfunction $\rho_k$ of the operator $L$, normalized to have $H^s$ norm equal to $1$, such that its associated eigenvalue satisfies $\lambda_k \geq k^\beta /C_a$ (where $C_a$ is a positive constant). It follows that the solution $\psi(x,t)$ of \eqref{eq:PSI:eq} with initial condition $\psi_0 = \rho_k$, is given by $\exp(t \lambda_k) \psi_0(x)$ (again we invoke   Proposition~\ref{prop:unique} for uniqueness). Therefore, recalling how $\rho_k$ was constructed,  by \eqref{eq:L2:Hs:bound} we obtain
\begin{align}
\| \psi(\cdot,\bar T)\|_{L^2} = \exp(\bar T \lambda_k) \| \rho_k \|_{L^2} &\geq \frac{\exp(\bar T \lambda_k)}{C_{a,s} k^s} \geq \frac{\exp(\bar T k^\beta/C_a)}{C_{a,s} k^s}
\end{align}
where $C_{a}$ and $C_{a,s}$ are constant that may depend on $a$ and $s$. Since for any given $\bar T, \bar K >0$, we can find a sufficiently large $k$ such that
$\exp(\bar T k^\beta/ C_a)/ (C_{a,s} k^s) \geq 2 \bar K$ the proof is now completed, since we arrived at a contraction with \eqref{eq:psi:bound}.
\end{proof}

\subsection{Ill-posedness of active scalar equations with singular even constitutive law}
\label{sec:general:ill}

The method used to prove ill-posedness for the SIPM equations may be directly
generalized to show the ill-posedness for a class of active
scalar equations of the type
\begin{align}
& \partial_t \theta + \uu \cdot \grad \theta = 0,\label{eq:active-general:1} \\
& \grad \cdot \uu =0, \; \uu = \MM \theta,   \label{eq:active-general:2}
\end{align}
where $(\xx,t) \in \TT^{d} \times [0,\infty)$. The $d$-dimensional vector field $\uu$ is obtained from $\theta$ via the Fourier multiplier operator $\MM$, which is given explicitly in term of the Fourier symbol $\mm = (m_1, \ldots, m_{d-1}, m_d) \colon \ZZ^{d} \to \RR^{d}$. We denote the frequency variable by $\kk$. In this section we give sufficient conditions for $\MM$ which
ensure the ill-posedness of \eqref{eq:active-general:1}--\eqref{eq:active-general:2}.

Let $j \in \{1,\ldots,d\}$ be a fixed coordinate, which for ease of notation we simply take to be $j=d$. We write $\kk'$ to denote the $d-1$ dimensional vector $(k_1,\ldots,k_{d-1}) \in \ZZ^{d-1}$. We assume the following hold:
\begin{enumerate}
\item \label{eq:PM0} $\mm(\boldsymbol{0}',a) = 0$ for a given positive integer $a$ (which we fixed throughout this section); that is, $\bar \theta = \sin(a x_d)$ is a steady state solution of \eqref{eq:active-general:1}--\eqref{eq:active-general:2} with corresponding velocity $\bar \uu= 0$;
\item \label{eq:PM00} $m_d(\kk)$ is a real positive rational function, that is even in $\kk$;
\item \label{eq:PM1} $m_d(\kk',na) \to \infty$ as $|\kk'|\to \infty$, for any fixed $n \in \NN$;
\item \label{eq:PM2} $m_d(\kk',na) \to  0$ as $n \to \infty$, for any fixed $ \kk' \in \ZZ^{d-1}$;
\item \label{eq:PM3} $m_d(\kk',(n+1)a) < m_d(\kk', na)$ for all $n \in \NN$, and any fixed $\kk' \in \ZZ^{d-1}$;
\item \label{eq:PM4} $|\mm(\kk)| \leq C (1 + |\kk|)^{r_{0}}$ for some $r_{0}\geq 0$ and $C>0$, for all $\kk \in \ZZ^{d}$.
\end{enumerate}
Examples of such equations are given by the magneto-geostrophic equation introduced
in~\cite{M} (see also \cite{FV2}), and the singular incompressible porous media equation, both in two and three dimensions.
\begin{theorem}
Assume the Fourier multiplier symbol $\mm$ satisfies properties \eqref{eq:PM0}--\eqref{eq:PM3} above. Then the active scalar equation \eqref{eq:active-general:1}--\eqref{eq:active-general:2} is Lipschitz  $(H^r,H^s)$ ill-posed, for any $r > \max \{2,r_{0}\}$ and $s\geq r+1$, in the sense of
Definition~\ref{def:well}.
\end{theorem}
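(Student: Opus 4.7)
The approach mirrors Sections 2.1 and 2.3, substituting the abstract multiplier $\MM$ for the concrete Riesz-type symbol used in SIPM. First, property~\eqref{eq:PM0} makes $\bar\theta(\xx)=\sin(ax_d)$ a steady state, since the only nonzero Fourier modes of $\bar\theta$ sit at $(\boldsymbol{0}',\pm a)$ where $\mm$ vanishes. Linearizing \eqref{eq:active-general:1}--\eqref{eq:active-general:2} about $\bar\theta$ produces
\begin{align*}
L\theta = -a\cos(ax_d)\,(\MM\theta)_d,
\end{align*}
so the whole problem reduces to producing, in analogy with Lemma~\ref{lemma:eigenvalue}, a sequence of eigenpairs $(\lambda_k,\rho_k)$ of $L$ with $\lambda_k\to\infty$, unit $H^s$ norm, and a lower bound on $\|\rho_k\|_{L^2}$ that is polynomial in $1/k$.

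For the eigenfunctions I would run the same continued-fraction ansatz as in Section~2.1: for each $\kk'\in\ZZ^{d-1}$ with $|\kk'|$ large, I seek an eigenfunction $\theta(\xx)=\sin(\kk'\cdot\xx')\sum_{n\geq 1}c_n\sin(nax_d)$. Property~\eqref{eq:PM00} (realness and evenness of $m_d$ in $\kk$) forces the three-term recursion to close with real coefficients of the same shape as \eqref{rec:1}--\eqref{rec:2}, with $p_n$ proportional to $1/m_d(\kk',na)$. Property~\eqref{eq:PM3} then gives $p_n<p_{n+1}$ and property~\eqref{eq:PM2} gives $p_n\to\infty$, so Theorem~\ref{thm:*} applies and delivers a real positive $\lambda_{\kk'}$ with $\lambda_{\kk'}\geq 1/\sqrt{p_1 p_2}$. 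Property~\eqref{eq:PM1} then forces $\lambda_{\kk'}\to\infty$ as $|\kk'|\to\infty$, and the exponential decay of $\{c_n\}$ delivered by Theorem~\ref{thm:*} yields smoothness of the eigenfunction together with the Sobolev estimate \eqref{eq:Sobolev:Estimate}; normalizing as in Lemma~\ref{lemma:eigenvalue} produces the required sequence $(\lambda_k,\rho_k)$.

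The remaining argument is a transcription of Proposition~\ref{prop:unique} and Theorem~\ref{thm:nonlinear} with only cosmetic changes. The nonlinear bound $\|\MM\theta\cdot\grad\theta\|_{L^2}\leq C\|\theta\|_{H^r}^2$ holds for $r>\max\{2,r_0\}$, using property~\eqref{eq:PM4} to bound $\|\MM\theta\|_{L^2}\leq C\|\theta\|_{H^{r_0}}$ and the Sobolev embedding $H^{r-1}\hookrightarrow L^\infty$ in $\TT^2$ to control $\|\grad\theta\|_{L^\infty}$. A column-wise Fourier energy estimate along the lines of Proposition~\ref{prop:unique} gives uniqueness for $\partial_t\psi=L\psi$: at fixed $\kk'$ the operator shifts $k_d\mapsto k_d\pm a$ and multiplies by $m_d(\kk',k_d\pm a)$, and property~\eqref{eq:PM4} together with the a priori $H^r$-bound on $\psi$ yields the required Gr\"onwall inequality. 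With uniqueness in hand, the Aubin--Lions scheme of Theorem~\ref{thm:nonlinear} runs verbatim: the weak-$\ast$ limit $\psi$ corresponding to initial datum $\psi_0=\rho_k$ must equal $e^{t\lambda_k}\rho_k$, and choosing $k$ so that $\exp(\bar T\lambda_k)/(C_{a,s}k^s)>2\bar K$ produces the contradiction. The step demanding the most care is the transfer of the column-wise uniqueness: the SIPM proof used the $k_d$-independent bound $|m_d|\leq C|k_1|^\beta$, whereas property~\eqref{eq:PM4} only bounds $|m_d(\kk)|$ by $C(1+|\kk|)^{r_0}$, and the extra $k_d$-growth must be absorbed using the hypothesis $r>r_0$ together with the a priori regularity of $\psi$.
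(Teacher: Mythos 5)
Your proposal is correct and follows essentially the same route as the paper: linearize about $\bar\theta=\sin(ax_d)$, run the continued-fraction ansatz with $p_n$ proportional to $1/m_d(\kk',na)$, invoke Theorem~\ref{thm:*} using \eqref{eq:PM2}--\eqref{eq:PM3} for monotonicity and unboundedness of the $p_n$'s, use \eqref{eq:PM1} to send $\lambda_{\kk'}\to\infty$, and transfer the nonlinear contradiction verbatim from Section~\ref{sec:ill-posedness}. You are in fact slightly more careful than the paper, which omits the nonlinear details entirely: your observation that the column-wise uniqueness argument of Proposition~\ref{prop:unique} needs the extra $k_d$-growth of $m_d$ absorbed via $r>r_0$ addresses a point the paper leaves implicit.
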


\begin{proof}
We will only prove the
unboundedness of the spectrum of the linearized operator associated
with \eqref{eq:active-general:1}--\eqref{eq:active-general:2}. The nonlinear Lipschitz ill-posedness follows by arguments verbatim to those in Section~\ref{sec:ill-posedness}. It is here where condition \eqref{eq:PM4} is used: to ensure that for $r>\max\{r_{0},2\}$ the bound analogous to \eqref{eq:N:bound} holds. We omit these details. 

By assumption \eqref{eq:PM0}, $\bar \theta(x_d) = \sin(a x_d)$ is a steady state solution of \eqref{eq:active-general:1}, for some given $a \in \NN$.
The linearized operator around $\bar\theta(x_d)$ has the form
\begin{equation}
  \label{def:L-M}
  L \theta(\xx) = - \MM \theta(\xx) \cdot \grad \bar\theta(x_d) = - M_d \theta(\xx) \; \bar\theta'(x_d) = - (m_d \hat \theta)^{\vee}(\xx) a \cos (a x_d).
\end{equation}
We will construct an appropriate eigen-pair $(\lambda, \theta)$ of
$L$, $i.e.$, a solution of
\begin{equation}
  \label{eq:eigen-L-M}
  L \theta = \lambda \theta \,.
\end{equation}
Fix $\kk' \in \ZZ^{d-1}$, and make the ansatz
\begin{equation}
  \label{ansatz-M}
  \theta (\xx)
  = \prod_{i =1}^{d-1} \sin(k_i x_i)
    \sum_{n\geq 1}
      c_n \sin(na x_d) .
\end{equation}
Inserting \eqref{ansatz-M} into
\eqref{eq:eigen-L-M} and matching the corresponding Fourier modes
gives the recursion relation
\begin{align}
  &\lambda c_1 + \frac{c_2}{p_2} = 0,
     \mbox{ when } n=1, \label{rec:1-M}  \\
  &\lambda c_n + \frac{c_{n+1}}{p_{n+1}} + \frac{c_{n-1}}{p_{n-1}} = 0,
     \mbox{ for all } n \geq 2, \label{rec:2-M}
\end{align}
where for all $n\geq 1$ we have denoted
\begin{align}
  \label{pn-M}
  p_n = \frac{2/a}{m_d(\kk',n a)} \,.
\end{align}
We point out that by \eqref{eq:PM2} and \eqref{eq:PM3} we know that the $p_n$ are monotone increasing, and growing unboundedly. Hence, to solve \eqref{rec:1-M}--\eqref{rec:2-M}, as in Section~\ref{sec:linear} we need to find a positive root of the characteristic
equation
\begin{align}
  \label{therecursion-M}
  \lambda p_1
  = \frac{1}{ \lambda p_2 - \frac{1}{ \lambda p_3 - \frac{1}{ \lambda p_4 - \dots }}} \,.
\end{align}
This root exists in view of Theorem~\ref{thm:*}, since the $p_n$'s are increasing and unbounded. We also obtain that the coefficients $c_n$ decay exponentially fast so that the function $\theta$ constructed in \eqref{ansatz-M} is smooth. In addition, we have a bound for $\lambda$ of the form
\begin{align*}
\frac{1}{\sqrt{p_1 p_2}} < \lambda < \frac{1}{\sqrt{p_1 p_2 - p_1^2}}
\end{align*}
which combined with \eqref{eq:PM1} shows that we can find arbitrarily large  $\lambda$, by simply letting $|\kk'|$ be large enough. This shows that the spectrum of the linearized operator $L$ contains eigenvalues of arbitrary large positive part, and therefore the linearized equation is ill-posed, in the sense that it possesses no semigroup that is continuous at $t=0$ in $L^2$.
\end{proof}

\section{Local well-posedness for weak solutions of patch-type for the SIPM equations}

 In this section we consider solutions for a scalar $\rho(\xx,t)$ given by
\begin{equation}\label{rhopatch}
\rho(\xx,t)=\left\{\begin{array}{cl}
                    \rho^1\quad\mbox{in}&D^1(t)=\{x\in\RR^2: x_2>f(x_1,t)\}\\
                    \rho^2\quad\mbox{in}&D^2(t)=\RR^2\setminus D^1(t),
                 \end{array}\right.
\end{equation} where $\rho^1,\rho^2\geq 0$ are constants, $\rho^1\neq\rho^2$ and the common boundary $\partial D^j(t)$ $j=1,2$ is parameterized as $x_2=f(x_1,t)$. If $\rho(\xx,t)$ satisfies \eqref{rhopatch}, \eqref{transport} and \eqref{Sdarcy} we say that it is a patch-type solution. Then SIPM is understood in the distributional sense and its precise definition is as follows:

\begin{definition}\label{dsd}
Let $T>0$. A function $\rho\in L^\infty(0,T;L^\infty(\RR^2))$ satisfying \eqref{rhopatch}  is a weak solution of \eqref{transport}-\eqref{Sdarcy} if for any test function $\phi\in C^\infty_c([0,T)\times\RR^2)$, the following integral equation holds:
\begin{align}
\int_0^T\int_{\RR^2}\rho (\partial_t\phi +\vv \cdot \grad\phi ) d\xx\,dt+\int_{\RR^2}\rho_0(\xx)\phi(\xx,0)d\xx=0,
\label{eq:weak:def}
\end{align}
and $\vv$ may be computed from $\rho$ by means of \eqref{Sdarcy}, i.e. $\vv = \RB^\perp R_1 \Lambda^\beta \rho$ in the sense of distributions.
\end{definition}
 We will show later that if $\rho$ satisfies \eqref{rhopatch} and $f$ is smooth then
\begin{equation}\label{singvel}
|\vv(\xx,t)|\leq C \frac{|\rho^2-\rho^1|}{|x_2-f(x_1,t)|^\beta}\in L^1_{loc}(\RR^2), \forall\,t\geq 0,
\end{equation}
due to $0<\beta<1$. This implies that the nonlinear term in \eqref{eq:weak:def} is well defined and may be bounded as
\begin{align*}
\int_0^T\int_{\RR^2}|\rho \vv\cdot \grad\phi| d\xx\,dt\leq \|\rho\|_{L^\infty}\|\vv\|_{L^1(\supp \phi)}\|\grad\phi\|_{L^\infty}.
\end{align*}

Here we give the main ingredients to obtain the following contour equation for $f$:
\begin{align}
f_t(\eta,t)&=\frac{\rho^2-\rho^1}{C_\beta}\int_{\RR}\frac{(\eta-\zeta)(\de f(\eta,t)-\de
f(\zeta,t))}{((\eta-\zeta)^2+(f(\eta,t)-f(\zeta,t))^2)^\frac{2+\beta}{2}}d\zeta, \label{ec:1} \\
f(\eta,0)&=f_0(\eta),\label{ec:2}
\end{align}
where $\eta\in\RR$, $C_\beta>0$, and $0<\beta<1$.
Next we obtain local-existence for the system above with $\rho^2>\rho^1$.

  To get the evolution for $f$ we need the velocity at $(\eta,f(\eta,t))$ but only in the normal direction. In fact
\begin{align*}
(\eta,f(\eta,t))_t\cdot (-\de f(\eta,t),1)= \vv(\eta,f(\eta,t),t)\cdot (-\de f(\eta,t),1).
\end{align*}
 We understand above expression for the velocity with the following limit
\begin{align}
\vv(\eta,f(\eta,t),t)\cdot(-\de f(\eta,t),1)
&=\lim_{\epsilon\to 0}\vv(\eta-\epsilon\de f(\eta,t),f(\eta,t)+\epsilon)\cdot(-\de f(\eta - \epsilon \de f(\eta,t),t),1).
\label{eq:v:normal}
\end{align}
We shall now prove that the limit in \eqref{eq:v:normal} is exactly the expression on the right side of \eqref{ec:1}. By \eqref{Sdarcy} we have
 the following relation between $v$ and $\rho$
\begin{equation}\label{vrho}
\vv=\partial_{x_1}\Lambda^{-2+\beta}\grad^{\bot}\rho.
\end{equation}
 For $g$ a regular function, it is a classical fact that
\begin{align}
\partial_{x_1}\Lambda^{-2+\beta}g(x)=-\frac{1}{C_\beta} \int_{\RR^2}\frac{x_1-y_1}{|x-y|^{2+\beta}}g(y)dy
\label{eq:RieszPotential}
\end{align}
where $C_{\beta}=(\pi2^{2-\beta}\Gamma(\frac{2-\beta}{2}))/(\beta \Gamma(\beta/2))$ is a normalization constant.
The identity
\begin{align*}
\grad^{\bot}\rho(x,t)=(\rho^2-\rho^1)(1,\de f(\eta,t))\delta(x_2-f(\eta,t)),
\end{align*}
where $\delta$ stands of the Dirac delta function,
combined with \eqref{vrho} and \eqref{eq:RieszPotential} allows us to write
\begin{equation}\label{velnocurve}
\vv(x,t)=-\frac{\rho^2-\rho^1}{C_\beta}\int_{\RR}\frac{(x_1-\zeta)(1,\de f(\zeta,t))}{|x-(\zeta,f(\zeta,t))|^{2+\beta}}d\zeta.
\end{equation}
Using \eqref{velnocurve} we compute the limit in \eqref{eq:v:normal}
\begin{align*}
& \lim_{\epsilon\to 0}\vv(\eta-\epsilon\de f(\eta,t),f(\eta,t)+\epsilon)\cdot(-\de f(\eta - \epsilon \de f(\eta,t),t),1) \notag\\
& \qquad =\lim_{\epsilon\to 0} -\frac{\rho^2-\rho^1}{C_\beta}\int_{\RR} \frac{(\eta-\zeta-\epsilon \de f(\eta,t) )   ( \de f(\zeta,t) - \de f (\eta - \epsilon \de f(\eta,t),t) ) d\zeta}{((\eta-\zeta-\epsilon \de f(\eta,t))^2+(f(\eta,t)-f(\zeta,t)+\epsilon)^2)^{\frac{2+\beta}{2}}}
\\
& \qquad =  \frac{\rho^2-\rho^1}{C_\beta}  \lim_{\epsilon\to 0} \int_{\RR} L^\epsilon(\eta,\zeta) d\zeta.
\end{align*}

We split the integrand $L^\epsilon$ into $L_1^\epsilon (\eta,\zeta)= L^\epsilon(\zeta,\eta) \chi(|\zeta-\eta|>r)$ and $L_2^\epsilon(\eta,\zeta) = L^\epsilon(\zeta,\eta) \chi(|\zeta-\eta|\leq r)$, where $\chi$ stands for the characteristic function, and $r>0$ is a fixed number. Without loss of generality we may take $\epsilon \leq r/ (2 \| \de f\|_{L^\infty})$, since we send it to $0$ anyway. For such  $\epsilon$ we bound $L_1^\epsilon$ pointwise as
\begin{align}
|L_1^\epsilon(\eta,\zeta)|
&\leq \frac{|\eta-\zeta-\epsilon \de f(\eta,t)| \;   |\de f(\zeta,t) - \de f (\eta - \epsilon \de f(\eta,t),t)|}{|\eta-\zeta-\epsilon \de f(\eta,t)|^{2+\beta}} \chi(|\zeta-\eta|>r) \notag\\
&\leq \frac{C \| \de f\|_{L^\infty}}{ |\eta-\zeta-\epsilon \de f(\eta,t)|^{1+\beta} }  \chi(|\zeta-\eta|>r) \leq \frac{C \| \de f\|_{L^\infty}}{ |\eta-\zeta|^{1+\beta} } \chi(|\zeta-\eta|>r). \label{eq:L1eps:bound}
\end{align}
Since the right side of \eqref{eq:L1eps:bound} lies in $L^1(\RR)$, and is independent of $\epsilon$, from the dominated convergence theorem we obtain that
\begin{align}
\lim_{\epsilon\to 0} \int_{\RR} L_1^\epsilon(\eta, \zeta) d\zeta = \int_{|\eta-\zeta|>r}  \frac{(\eta-\zeta)   ( \de f(\zeta,t) - \de f (\eta,t) ) d\zeta}{((\eta-\zeta)^2+(f(\eta,t)-f(\zeta,t))^2)^{\frac{2+\beta}{2}}}. \label{eq:L1eps:limit}
\end{align}
On the other hand, we now show that the integral of $L_2^\epsilon$ is small uniformly in $\epsilon$. By the mean value theorem, and the fact that $\epsilon \leq r/ (2 \| \de f\|_{L^\infty})$,  we have
\begin{align}
\int_{\RR} |L_2^\epsilon(\eta,\zeta)| d\zeta
& \leq C \| \partial_{\eta\eta} f\|_{L^\infty} \int_{|\zeta - \eta| \leq r} \frac{d\zeta} { |\eta - \zeta - \epsilon \de f(\eta,t)|^{\beta}} \notag\\
&\leq C \| \partial_{\eta\eta} f\|_{L^\infty} \int_{|z| \leq 2r} \frac{dz} { |z|^{\beta}} \leq C \| \partial_{\eta\eta}f \|_{L^\infty}  r^{1-\beta}. \label{eq:L2eps:bound}
\end{align}
By combining \eqref{eq:L1eps:limit} and \eqref{eq:L2eps:bound} we conclude that for any $r>0$
\begin{align}
\left| \lim_{\epsilon\to 0} \int_{\RR} L^\epsilon(\eta,\zeta) d\zeta - \int_{|\eta-\zeta|>r}  \frac{(\eta-\zeta)   ( \de f(\zeta,t) - \de f (\eta,t) ) d\zeta}{((\eta-\zeta)^2+(f(\eta,t)-f(\zeta,t))^2)^{\frac{2+\beta}{2}}} \right| \leq C \| \partial_{\eta\eta}f \|_{L^\infty}  r^{1-\beta},
\end{align}
which converges to $0$ as $r \to 0$, since $\beta \in (0,1)$. We have thus proven that
\begin{align*}
& \lim_{\epsilon\to 0}\vv(\eta - \epsilon\de f(\eta,t),f(\eta,t) + \epsilon) \cdot (-\de f(\eta - \epsilon \de f(\eta,t),t),1) \notag\\
& \qquad \qquad =
\frac{\rho^2-\rho^1}{C_\beta} \int_{\RR}\frac{(\eta-\zeta)(\de f(\eta,t)-\de f(\zeta,t))
d\zeta}{((\eta - \zeta)^2 + (f(\eta,t) - f(\zeta,t))^2)^{\frac{2 + \beta}{2}}},
\end{align*}
and hence \eqref{ec:1} holds. The rest of the section is devoted to proving the following result.

\begin{theorem}\label{elpatch}
Let $\rho^2>\rho^1$, $\beta \in (0,1)$, and $f_0\in H^s$ for $s\geq 4$. Then there exists $T=T(\|f_0\|_{H^s})>0$ such that the contour differential equation given by \eqref{ec:1}--\eqref{ec:2} has a unique solution $f\in C([0,T],H^s)$.
\end{theorem}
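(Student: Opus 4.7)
My plan is a standard energy / approximation / uniqueness scheme adapted to the nonlocal first-order-in-time contour equation \eqref{ec:1}. The three ingredients are (a) an a priori $H^s$ estimate, (b) a construction of approximate solutions, and (c) a uniqueness argument.

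The heart of the matter is the a priori bound
\[
  \frac{d}{dt}\|f(\cdot,t)\|_{H^s}^2 \leq C\bigl(\|f(\cdot,t)\|_{H^s}\bigr),
\]
valid for $s\geq 4$ and $\beta\in(0,1)$. Before differentiating, I would first check that the integrand in \eqref{ec:1} is well defined in $L^1_\zeta$: the numerator vanishes to order $|\eta-\zeta|^2$ at $\zeta=\eta$ (by the mean value theorem and control of $\partial_{\eta\eta}f$ via $H^s\hookrightarrow C^2$), while the denominator is bounded below by $|\eta-\zeta|^{2+\beta}$, so the integrand is controlled by $\|\partial_{\eta\eta}f\|_{L^\infty}|\eta-\zeta|^{-\beta}$ near the diagonal and by $|\eta-\zeta|^{-1-\beta}\|\partial_\eta f\|_{L^\infty}$ at infinity — both integrable for $0<\beta<1$. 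To control the top order, I would apply $\partial_\eta^s$, distribute derivatives by Leibniz, and inspect the worst term, in which all $s$ derivatives hit a single factor $\partial_\eta f(\eta)-\partial_\eta f(\zeta)$ producing $\partial_\eta^{s+1}f(\eta)-\partial_\eta^{s+1}f(\zeta)$. Pairing in $L^2_\eta$ with $\partial_\eta^s f$ and symmetrizing the resulting double integral in $(\eta,\zeta)$, the principal part takes the form
\[
  -\frac{\rho^2-\rho^1}{2C_\beta}\iint \frac{(\eta-\zeta)\bigl(\partial_\eta^{s+1}f(\eta)-\partial_\eta^{s+1}f(\zeta)\bigr)\bigl(\partial_\eta^s f(\eta)-\partial_\eta^s f(\zeta)\bigr)}{\bigl((\eta-\zeta)^2+(f(\eta)-f(\zeta))^2\bigr)^{(2+\beta)/2}}\,d\zeta\,d\eta,
\]
which after an integration by parts in $\eta$ becomes a non-positive fractional-seminorm-type dissipation of order $(1+\beta)/2$ applied to $\partial_\eta^s f$, precisely because of the Rayleigh–Taylor stable sign $\rho^2>\rho^1$. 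All remaining terms from Leibniz distribute fewer derivatives on each $f$-factor and can be absorbed using $H^s\hookrightarrow W^{2,\infty}$ for $s\geq 4$, commutator estimates in the spirit of Kato–Ponce, and the lower bound $((\eta-\zeta)^2+(f(\eta)-f(\zeta))^2)^{1/2}\geq |\eta-\zeta|$ which comes for free since $f$ is a graph.

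For existence, I would regularize the equation, for example by replacing the singular kernel with a mollified version $K_\varepsilon$ (cutting off $|\eta-\zeta|\leq\varepsilon$) and/or adding an artificial dissipation $-\varepsilon\Lambda^{2s} f$. The regularized right-hand side is locally Lipschitz as a map $H^s\to H^s$, so the Picard theorem on Banach spaces produces a unique solution $f^\varepsilon\in C([0,T_\varepsilon];H^s)$. Reproducing the energy estimate of the previous paragraph uniformly in $\varepsilon$ yields a common interval $[0,T]$ with $T=T(\|f_0\|_{H^s})$ on which $\{f^\varepsilon\}$ is bounded in $L^\infty(0,T;H^s)$; from \eqref{ec:1} one reads off $\{\partial_t f^\varepsilon\}$ bounded in, say, $L^\infty(0,T;H^{s-1})$. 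Aubin–Lions compactness then produces a weak-$*$ limit $f\in L^\infty(0,T;H^s)$ that is strongly convergent in $C([0,T];H^{s'})$ for any $s'<s$, which is enough to pass to the limit in the nonlinear integrand. Upgrading weak to strong continuity in $H^s$ is the usual argument combining weak continuity with continuity of the norm, the latter provided by the energy (in)equality.

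Uniqueness I would get by subtracting two solutions $f_1,f_2\in C([0,T];H^s)$, writing the difference $g=f_1-f_2$, splitting the quadratic-type nonlinearity into pieces linear in $g$ and $\partial_\eta g$ with coefficients that are continuous functionals of $(f_1,f_2)$ bounded in $L^\infty$ by $\|f_i\|_{H^s}$, and closing a Grönwall estimate in $L^2$ (or in $H^1$ if needed to accommodate the $\partial_\eta g$ term, which costs only a derivative of $g$ and is still a fractional operator of order strictly less than one thanks to $\beta<1$). The main obstacle throughout is the top-order energy estimate: one must truly use the Rayleigh–Taylor stable sign $\rho^2>\rho^1$ to tame the loss of $\beta$ derivatives in the kernel, since naively the leading term would give $\|f\|_{H^{s+\beta/2}}^2$ on the right-hand side and cannot be closed in $H^s$ alone.
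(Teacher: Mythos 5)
Your proposal follows essentially the same route as the paper: symmetrize the top-order term in the $\dot H^s$ energy identity, integrate by parts along the diagonal so that the product $(\partial_\eta^{s+1}f(\eta)-\partial_\eta^{s+1}f(\zeta))(\partial_\eta^{s}f(\eta)-\partial_\eta^{s}f(\zeta))$ turns into $(\partial_\eta^{s}f(\eta)-\partial_\eta^{s}f(\zeta))^2$ against the derivative of the kernel, extract from that derivative a signed dissipation of order $(1+\beta)/2$ (this is exactly where $\rho^2>\rho^1$ enters), and absorb the remaining $\|\Lambda^{\beta/2}\partial_\eta^{s}f\|_{L^2}^2$-type contributions by interpolating against that dissipation. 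The existence step by regularization and compactness is also consistent with the paper, which simply defers that construction to the $\beta=0$ references.

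The one place your argument as written would not close is uniqueness. In the equation for $g=f_1-f_2$, the term in which the increment $\partial_\eta f(\eta)-\partial_\eta f(\zeta)$ is replaced by $\partial_\eta g(\eta)-\partial_\eta g(\zeta)$ sits against a kernel of size $|\eta-\zeta|^{-1-\beta}$, so it is an operator of order $1+\beta>1$ acting on $g$, not one of order ``strictly less than one.'' Pairing it with $g$ in $L^2$ naively produces $\|\Lambda^{(1+\beta)/2}g\|_{L^2}^2$ on the right-hand side, and passing to $H^1$ merely shifts the same half-derivative excess up by one order, so neither of your two suggested closures works as stated. The repair is the device you already used at top order: symmetrize this term and observe that its leading part is precisely the dissipation $-c\,\|\Lambda^{(1+\beta)/2}g\|_{L^2}^2$ with the favorable sign (again from $\rho^2>\rho^1$), after which the $L^2$ Gr\"onwall estimate for the difference closes; this is how the paper handles uniqueness.
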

We give the proof for $s=4$ and leave $s>4$ to the reader. For notational convenience, we take the coefficient $(\rho^2-\rho^1)/C_\beta=1$ and omit the time dependence of $f$.

\begin{proof}[Proof of Theorem~\ref{elpatch}] We proceed by proving an a priori energy estimate of the form
\begin{align*}
\frac{d}{dt}\|f\|_{H^4}\leq C (1 + \|f\|_{H^4})^k
\end{align*}
for $C$ and $k>1$ universal constants.
By \eqref{ec:1} we have
\begin{align*}
\frac12\frac{d}{dt}\|f\|_{L^2}^2=\int_{\RR}f(\eta)f_t(\eta)d\eta=I_1+I_2,
\end{align*}
where
\begin{align*}
I_1=\int_{|\zeta|>1} \zeta \int_{\RR}\frac{f(\eta)(\de f(\eta)-\de f(\eta-\zeta))
}{(|\zeta|^2 + (f(\eta) - f(\eta-\zeta))^2)^{\frac{2 + \beta}{2}}} d\eta d\zeta
\end{align*}
and
\begin{align*}
I_2=\int_{|\zeta|\leq 1} \zeta \int_{\RR}\frac{f(\eta)(\de f(\eta)-\de f(\eta-\zeta))
}{(|\zeta|^2  + (f(\eta) - f(\eta-\zeta))^2)^{\frac{2 + \beta}{2}}} d\eta d\zeta.
\end{align*}
Using the Cauchy-Schwartz inequality and $\beta>0$, we estimate
\begin{align*}
I_1\leq\int_{|\zeta|>1}\frac{1}{|\zeta|^{1+\beta}}\int_{\RR}|f(\eta)|(|\de f(\eta)|+|\de f(\eta-\zeta)|)
d\eta d\zeta\leq C \|f\|_{H^1}^2.
\end{align*}
On the other hand, using the Cauchy-Schwartz inequality and  the Gagliardo characterization of the Sobolev norm~\cite{Stein} we obtain
\begin{align*}
I_2 &\leq \int_{|\zeta|\leq 1}  \int_{\RR}\frac{|f(\eta)|\; |\de f(\eta)-\de f(\eta-\zeta)|
}{|\zeta|^{1+\beta}} d\eta d\zeta\\
&\leq \|f \|_{L^2} \int_{|\zeta|\leq 1} \frac{1}{|\zeta|^{1+\beta}} \left(\int_{\RR} |\de f(\eta)-\de f(\eta-\zeta)|^2 d\eta \right)^{1/2} d\zeta\\
&\leq C \|f \|_{L^2} \left( \int_{|\zeta|\leq 1}  \int_{\RR} \frac{|\de f(\eta)-\de f(\eta-\zeta)|^2}{|\zeta|^{2+2\beta}}  d\eta d\zeta  \right)^{1/2}   \leq C \|f\|_{L^2} \|f\|_{H^{1+\beta}}.
\end{align*}
Consequently
\begin{equation}\label{ee1}
\frac12\frac{d}{dt}\|f\|_{L^2}^2\leq C\|f\|_{H^2}^2.
\end{equation}

Next we estimate the $\dot{H}^4$ norm of $f$. Using the Leibniz rule we obtain
\begin{align*}
\frac12\frac{d}{dt}\|\de^4 f\|_{L^2}^2(t)=\int_{\RR}\de^4f(\eta)\de^4f_t(\eta)d\eta=J_0+J_1+J_2+J_3,
\end{align*}
where
\begin{equation}\label{ji}
J_i=\binom{3}{i}\int_{\RR}\de^4f(\eta)\de \int_{\RR}\zeta(\de^{4-i}f(\eta) - \de^{4-i} f(\eta - \zeta))
\de^{i}\Big(\zeta^2+(f(\eta)-f(\eta - \zeta))^2)^{-\frac{2+\beta}{2}}\Big)d\zeta d\eta
\end{equation}
for $0 \leq i \leq 3$. We deal first with the most singular term $J_0$, which contains fifth order derivatives. Integrating by parts we rewrite
\begin{align} \label{ji-1-1}
  J_0
& = \int_{\RR}
      \de^4 f(\eta)
      \de \int_{\RR}
            \frac{\zeta (\de^4 f(\eta)- \de^4 f(\eta - \zeta))}
                 {(\zeta^2+(f(\eta)-f(\eta - \zeta))^2)^{\frac{2+\beta}{2}}} d\zeta
                 d\eta \notag
\\
& = - \int_{\RR}
      \de^5 f(\eta)
      \int_{\RR}
        \frac{\zeta (\de^4 f(\eta)- \de^4 f(\eta - \zeta))}
             {(\zeta^2+(f(\eta)-f(\eta - \zeta))^2)^{\frac{2+\beta}{2}}}
      d\zeta d\eta.
\end{align}
By the change of variable $(\eta, \zeta) \to (\eta-\zeta,
-\zeta)$, we can further rewrite $J_0$ as
\begin{align}\label{ji-1-2}
  J_0
& = - \int_{\RR}
      \int_{\RR}
      \de^5 f(\eta-\zeta)
        \frac{\zeta (\de^4 f(\eta)- \de^4 f(\eta - \zeta))}
             {(\zeta^2+(f(\eta)-f(\eta - \zeta))^2)^{\frac{2+\beta}{2}}}
      d\zeta d\eta.
\end{align}
Taking the average of  \eqref{ji-1-1} and \eqref{ji-1-2}, and
applying a further change of variables $(\eta, \zeta) \to
(\eta+\frac{\zeta}{2}, \zeta)$, we have
\begin{align*}
  J_0
& = - \frac 12
     \int_{\RR} \int_{\RR}
      (\de^5 f(\eta) + \de^5 f(\eta-\zeta))
        \frac{(\de^4 f(\eta)- \de^4 f(\eta - \zeta)) \zeta}
             {(\zeta^2+(f(\eta)-f(\eta - \zeta))^2)^{\frac{2+\beta}{2}}}
      d\zeta d\eta
\\
& = - \frac 12
     \int_{\RR} \int_{\RR}
      (\de^5 f(\eta + \tfrac{\zeta}{2}) + \de^5 f(\eta-\tfrac{\zeta}{2}))
        \frac{(\de^4 f(\eta + \frac{\zeta}{2})- \de^4 f(\eta - \frac{\zeta}{2})) \zeta}
             {(\zeta^2+(f(\eta + \frac{\zeta}{2})-f(\eta - \frac{\zeta}{2}))^2)^{\frac{2+\beta}{2}}}
      d\zeta d\eta
\\
& = - \int_{\RR} \int_{\RR}
      \partial_\zeta (\de^4 f(\eta + \tfrac{\zeta}{2}) - \de^4 f(\eta-\tfrac{\zeta}{2}))
        \frac{(\de^4 f(\eta + \frac{\zeta}{2})- \de^4 f(\eta - \frac{\zeta}{2})) \zeta}
             {(\zeta^2+(f(\eta + \frac{\zeta}{2})-f(\eta - \frac{\zeta}{2}))^2)^{\frac{2+\beta}{2}}}
      d\zeta d\eta.
\end{align*}
Here we used that $\de^5 f(\eta +\zeta/2) = 2 \dz \de^4 f(\eta+\zeta/2)$ and  $\de^5 f(\eta -\zeta/2) = -2 \dz \de^4 f(\eta-\zeta/2)$. Integration by parts in $\zeta$ and the change of variables $(\eta,\zeta) \to (\eta-\zeta/2,\zeta) $ then gives
\begin{align}
  J_0
& = \frac 12
    \int_{\RR} \int_{\RR}
      (\de^4 f(\eta + \tfrac{\zeta}{2}) - \de^4 f(\eta-\tfrac{\zeta}{2}))^2
       \dz \left(\zeta
                 (\zeta^2+(f(\eta + \tfrac{\zeta}{2})-f(\eta -
                 \tfrac{\zeta}{2}))^2)^{-\frac{2+\beta}{2}} \right)
      d\zeta d\eta \notag
\\
& = \frac 12
    \int_{\RR} \int_{\RR}
      (\de^4 f(\eta) - \de^4 f(\eta-\zeta))^2
       \dz \left(\zeta
                 (\zeta^2+(f(\eta)-f(\eta-\zeta))^2)^{-\frac{2+\beta}{2}} \right)
      d\zeta d\eta. \label{ji-2}
\end{align}
We compute explicitly the derivative term and obtain
\begin{align*}
&
  \dz\left(\zeta (\zeta^2 + (f(\eta) - f(\eta -\zeta))^2)^{\frac{2+\beta}{2}}\right)
\\
& = \frac{(2+\beta)(f(\eta) - f(\eta - \zeta))(f(\eta) - f(\eta -
          \zeta) - \de f(\eta - \zeta)\zeta)}
         {(\zeta^2 + (f(\eta) - f(\eta - \zeta))^2)^{\frac{4+\beta}{2}}}
   - \frac{1+\beta}{(\zeta^2 + (f(\eta) - f(\eta - \zeta))^2)^{\frac{2+\beta}{2}}}.
\end{align*}
Applying the explicit form of the derivative in \eqref{ji-2} gives
\begin{align} \label{J-0}
  J_0
& =  J_{0}^{(1)} + J_0^{(2)},
\end{align}
where
\begin{align*}
&J_{0}^{(1)} = \frac{2+\beta}{2}
    \int_{\RR}\int_{\RR}
      (\de^4 f(\eta) - \de^4 f(\eta-\zeta))^2
      \frac{(f(\eta) - f(\eta-\zeta))(f(\eta) - f(\eta -
          \zeta) - \de f(\eta-\zeta)\zeta)}
         {(\zeta^2 + (f(\eta)-f(\eta-\zeta))^2)^{\frac{4+\beta}{2}}}
    d\zeta d\eta
\end{align*}
and
\begin{align*}
J_0^{(2)} = -\frac{1+\beta}{2}
    \int_{\RR} \int_{\RR}
      \frac{(\de^4 f(\eta) - \de^4 f(\eta-\zeta))^2}
           {(\zeta^2 + (f(\eta) - f(\eta - \zeta))^2)^{\frac{2+\beta}{2}}}
      d\zeta d\eta.
\end{align*}
Using Gagliardo's characterization of the Sobolev norm, we may bound $J_0^{(1)}$ as
\begin{align}
J_{0}^{(1)}
&\leq  \frac{2+\beta}{2}
    \int_{\RR}\int_{\RR}
      (\de^4 f(\eta) - \de^4 f(\eta-\zeta))^2
      \frac{|f(\eta) - f(\eta-\zeta)| \; |f(\eta) - f(\eta -
          \zeta) - \de f(\eta-\zeta)\zeta|}
         {(\zeta^2 + (f(\eta)-f(\eta-\zeta))^2)^{\frac{4+\beta}{2}}}
    d\zeta d\eta \notag\\
 &\leq C \| f\|_{W^{1,\infty}} \|f\|_{W^{2,\infty}}  \int_{\RR}\int_{\RR}
      \frac{(\de^4 f(\eta) - \de^4 f(\eta-\zeta))^2}
         {|\zeta|^{1+\beta}}
    d\zeta d\eta \notag\\
 &= C \| f\|_{W^{1,\infty}} \|f\|_{W^{2,\infty}}  \| \Lambda^{\frac{\beta}{2}} \de^4 f\|_{L^2}^2. \label{eq:J01:A}
\end{align}
Applying the Sobolev embedding $H^4 \subset W^{2,\infty}$, and interpolation, we obtain from \eqref{eq:J01:A} that
\begin{align} \label{J-0-1}
  J_{0}^{(1)}
\leq
  C \|f\|_{H^4}^{4 + 2\beta} (1 + \|f\|_{H^4}^{2+\beta})
  + \frac{\|\Lambda^{\frac{1+\beta}{2}}\de^4f\|^2_{L^2}}
         {8(1+\|\de f\|^2_{L^\infty})^{\frac{2+\beta}{2}}}.
\end{align}

Next we show that the first term of $J_0^{(2)}$ in \eqref{J-0} gives a
dissipation, which is in fact needed to close the energy estimate. To this
end, we separate this term as follows:
\begin{align} \label{J-0-dissipation}
J_0^{(2)} &=
  -\frac{1+\beta}{2}
    \int_{\RR} \int_{\RR}
      \frac{(\de^4 f(\eta) - \de^4 f(\eta-\zeta))^2}
           {(\zeta^2 + (f(\eta) - f(\eta - \zeta))^2)^{\frac{2+\beta}{2}}}
      d\zeta d\eta \notag
\\
&=  -\frac{1+\beta}{2}
    \int_{\RR} \int_{\RR}
      \frac{(\de^4 f(\eta) - \de^4 f(\eta-\zeta))^2}
           {|\zeta|^{2+\beta}} \; \frac{d\zeta d\eta }{ (1+ ( (f(\eta) - f(\eta - \zeta))/\zeta)^2)^{\frac{2+\beta}{2}}}
      \notag
\\
& = -\frac{1+\beta}{2(1+\|\de f\|_{L^\infty}^2)^{\frac{2+\beta}{2}}}
    \int_{\RR} \int_{\RR}
      \frac{(\de^4 f(\eta) - \de^4 f(\eta-\zeta))^2}
           {|\zeta|^{2+\beta}}
      d\zeta d\eta
    + J_{01}^{(2)}
    + J_{02}^{(2)}\notag
\\
& = -\frac{1+\beta}{2(1+\|\de f\|_{L^\infty}^2)^{\frac{2+\beta}{2}}}
    \|\Lambda^{\frac{1+\beta}{2}}\de^4 f\|_{L^2}^2
    + J_{01}^{(2)}
    + J_{02}^{(2)},
\end{align}
where
\begin{align*}
  J_{01}^{(2)}
  = -\frac{1+\beta}{2}
    \int_{\RR}\int_{\RR}
     & \frac{(\de^4 f(\eta) - \de^4 f(\eta-\zeta))^2}{|\zeta|^{2+\beta}} \\
      &\times \left((1 + ((f(\eta)-f(\eta-\zeta))/\zeta)^2)^{-(2+\beta)/2} - (1+|\de f|^2)^{-(2+\beta)/2}\right)
    d\zeta d\eta,
\end{align*}
and
\begin{align*}
  J_{02}^{(2)}
  = -\frac{1+\beta}{2}
    \int_{\RR}\int_{\RR}
      \frac{(\de^4 f(\eta) - \de^4 f(\eta-\zeta))^2}{|\zeta|^{2+\beta}}
      \left((1+|\de f|^2)^{-\frac{2+\beta}{2}}
       - (1+\|\de f\|_{L^\infty}^2)^{-\frac{2+\beta}{2}}\right)
    d\zeta d\eta.
\end{align*}
Note that $J_{02}^{(2)}$ is well-defined for $f \in
H^{4+\frac{1+\beta}{2}}$, and we clearly have
\begin{align} \label{J-0-3}
  J_{02}^{(2)} \leq 0 \,.
\end{align}
Thus we only need a bound for $J_{01}^{(2)}$. By a similar argument as in
the estimates for $J_{0}^{(1)}$,
we estimate
\begin{align*}
\left|(1 + ((f(\eta)-f(\eta-\zeta))/\zeta)^2)^{-(2+\beta)/2} - (1+|\de f|^2)^{-(2+\beta)/2}\right| \leq C \|f\|_{W^{2,\infty}} |\zeta|
\end{align*}
and therefore
\begin{align}
  J_{01}^{(2)}
& \leq C \|f \|_{W^{2,\infty}} \| \Lambda^{\frac{\beta}{2}} \de^4 f \|_{L^2}^2
\leq  C \|f\|_{H^4}^{3 + 2\beta} (1 + \|f\|_{H^4}^{2+\beta})+ \frac{(1+\beta)\|\Lambda^{\frac{1+\beta}{2}}\de^4f\|^2_{L^2}}{8(1+\|\de f\|^2_{L^\infty})^{\frac{2+\beta}{2}}} \,.\label{J-0-2}
\end{align}
Combining \eqref{J-0-dissipation}, \eqref{J-0-3}, and \eqref{J-0-2},
we obtain
\begin{align} \label{J-0-first}
J_0^{(2)}
& \leq
   -\frac{1+\beta}{4(1+\|\de f\|_{L^\infty}^2)^{\frac{2+\beta}{2}}}
    \|\Lambda^{\frac{1+\beta}{2}}\de^4 f\|_{L^2}^2
    + C\|f\|_{H^4}^{3 + 2\beta} (1 + \|f\|_{H^4}^{2+\beta}).
\end{align}
Inserting the bounds \eqref{J-0-1} and \eqref{J-0-first} into \eqref{J-0}, we
finally obtain that
\begin{align} \label{J-0-second}
  J_0
\leq
   -\frac{1+\beta}{8(1+\|\de f\|_{L^\infty}^2)^{\frac{2+\beta}{2}}}
    \|\Lambda^{\frac{1+\beta}{2}}\de^4 f\|_{L^2}^2
    + \|f\|_{H^4}^{3 + 2\beta} (1 + \|f\|_{H^4}^{3+\beta}).
\end{align}

Estimating $J_i$, with $1\leq i\leq 3$, is direct since in these terms the higher order partial derivative in $f$ one can find is $4$. We may bound
\begin{align*}
\sum_{i=1}^3J_i\leq C \|f\|_{H^4}^{3 + 2\beta} (1 + \|f\|_{H^4}^{3+\beta})  +\frac{(1 + \beta)\|\Lambda^{\frac{1+\beta}2}\de^4f\|^2_{L^2}}{8(1 + \|\de f\|^2_{L^\infty})^{\frac{2+\beta}{2}}}
\end{align*}
which together with the estimate for  $J_0$ arising from \eqref{J-0-second} gives
\begin{align*}
\frac12\frac{d}{dt}\|\de^4 f\|_{L^2}^2\leq C \|f\|_{H^4}^{3 + 2\beta} (1 + \|f\|_{H^4}^{3+\beta})  -\frac{(1 + \beta)\|\Lambda^{\frac{1+\beta}2}\de^4f\|^2_{L^2}}{8(1 + \|\de f\|^2_{L^\infty})^{\frac{2+\beta}{2}}}.
\end{align*}
Finally, using the $L^2$ estimate \eqref{ee1} we obtain
\begin{align}
\frac12\frac{d}{dt}\|f\|_{H^4}^2\leq C \|f\|_{H^4}^{3 + 2\beta} (1 + \|f\|_{H^4}^{3+\beta})   -\frac{(1 + \beta)\|\Lambda^{\frac{1+\beta}2}\de^4f\|^2_{L^2}}{8(1 + \|\de f\|^2_{L^\infty})^{\frac{2+\beta}{2}}}. \label{eq:H4:BOUND}
\end{align}
This concludes the a priori estimates needed to obtain the local-existence of smooth solutions to \eqref{ec:1}--\eqref{ec:2}.

Regarding uniqueness, consider two solutions $f_1$ and $f_2$ of \eqref{ec:1}--\eqref{ec:2}. Using the nonlinear dissipation, a similar approach allows us to get
\begin{align*}
\frac12\frac{d}{dt}\|f_1-f_2\|_{L^2}^2\leq C(\|f_1\|_{H^4},\|f_2\|_{H^4})\|f_1-f_2\|_{L^2}^2 -\frac{(1 + \beta)\|\Lambda^{\frac{1+\beta}2}(f_1-f_2)\|^2_{L^2}}{8(1 + \|\de f_1\|^2_{L^\infty})^{\frac{2+\beta}{2}}}
\end{align*}
for some polynomial $C(\cdot,\cdot)$. The Gr\"onwall inequality then concludes the proof of uniqueness of solutions to \eqref{ec:1}--\eqref{ec:2}.

The construction of solutions obeying the a priori estimate \eqref{eq:H4:BOUND}, and verifying that the solutions to \eqref{ec:1}--\eqref{ec:2} give a weak solution of patch-type to the SIPM equations in the sense of Definition~\ref{dsd}, follows precisely the same arguments as in the case of the classical porous media equation (i.e. $\beta = 0$). We omit these details here and refer the reader to \cite{DP,DPR}.
\end{proof}

\appendix

\section{Proof of abstract results about continued fractions}
\label{app:fractions}
The goal of this appendix is to give a proof of Theorem~\ref{thm:*}. Recall that $\{ p_n \}_{n\geq 1}$ is a {\em strictly increasing} sequence of {\em positive} numbers, and we need to {find} and {estimate} solutions $\lambda$ of the continued fraction equation
\begin{align}
\lambda p_1 = \frac{1}{ \lambda p_2 - \frac{1}{ \lambda p_3 - \frac{1}{ \lambda p_4 - \dots }}}
\label{eq:*}
\end{align}
which are {\em real} and {\em positive}. In addition, given such a solution $\lambda$ of \eqref{eq:*} we recursively solve
\begin{align}
 \eta_{n+1} = - \lambda p_n - \frac{1}{\eta_n} \mbox{ for all } n\geq 2 \label{eq:eta:rec}
\end{align}
with initial condition $\eta_2 = - \lambda p_1$, and then denote
\begin{align}
 c_n = p_n \eta_n \ldots \eta_2 \label{eq:cn:def}
\end{align}
for all $n \geq 2$, and $c_1 = p_1$.

\begin{proof}[Proof of Theorem~\ref{thm:*}]
For $n\geq 2$, and $\lambda \geq 2/p_n$ one may define the function
\begin{align}
G_n (\lambda) = \frac{\lambda p_n - \sqrt{ \lambda^2 p_n^2 - 4}}{2} = \frac{2}{\lambda p_n + \sqrt{ \lambda^2 p_n^2 - 4}}. \label{eq:Gn:def}
\end{align}
Formally, we may write
\begin{align*}
 G_n(\lambda) = \frac{1}{ \lambda p_n - \frac{1}{ \lambda p_n - \frac{1}{ \lambda p_n - \dots }}}
\end{align*}
for any $\lambda \in \DD(G_n) = [2/p_n,\infty)$. Henceforth $\DD$ will stand for {\em domain}. Note that $\DD(G_{n}) \subset \DD(G_{n+1})$ for any $n \geq 2$. Let observe some properties of the $G_n$'s,  that are all due in view of \eqref{p:cond:i}: $ G_n (\lambda) \geq 0$; $G_n (\lambda) \to 0 \mbox{ as } \lambda \to \infty$; $1/(\lambda p_n) < G_n(\lambda) < 2/(\lambda p_n)$; and $G_{n+1} (\lambda) < G_{n}(\lambda)$ for any $\lambda \in \DD(G_n)$.
Define
\begin{align*}
 \lambda_0 = \frac{1}{\sqrt{p_1 p_2}} \mbox{, } \lambda_1 = \frac{2}{p_2} \mbox{, and } \DD_0= [ \min\{ \lambda_0,\lambda_1\}, \infty).
\end{align*}
One property of the $G_n$'s which we will use later is that
\begin{align}
G_{n+1}(\lambda) < \lambda p_n \mbox{ for any } \lambda \in
\DD(G_{n+1}) \cap \DD_0
 \label{eq:Gn:hard}
\end{align}
for any $n\geq 2$. Indeed, for those $n$ such that $ p_{n+1} < 2
p_n$, we have
\begin{align*}
 G_{n+1}(\lambda) < \frac{2}{\lambda p_{n+1}} < \lambda p_n,
\qquad
 \text{for any $\lambda \in \DD(G_{n+1})$ },
\end{align*}
since $p_{n+1} < 2 p_n \Rightarrow 2 < \lambda^2 p_n p_{n+1}$ when
$\lambda \geq 2/p_{n+1}$. If instead $p_{n+1} \geq 2 p_n$, then
for any $\lambda \in \DD_0$,
\begin{align*}
  \lambda^2 p_n p_{n+1}
\geq
  \min \left\{\frac{1}{p_1 p_2} p_n p_{n+1}, \; \frac{4}{p_2^2} p_n p_{n+1} \right\}
\geq
  \min \left\{\frac{2 p_n^2}{p_1 p_2}, \; 4 \right\}
\geq 2 \,,
\end{align*}
which proves~\eqref{eq:Gn:hard}.

For $n\geq 2$ we now introduce an auxiliary function $F_n(\lambda)$
formally defined as
\begin{align}
 F_n (\lambda) = \frac{1}{ \lambda p_n - \frac{1}{ \lambda p_{n+1} - \frac{1}{ \lambda p_{n+2} - \dots }}} \label{eq:Fn:def}
\end{align}
for any $\lambda \in \DD_0$.
In order to define $F_n$ rigorously, for $n\geq 2$ and $k \geq 0$, define the (real) rational function
\begin{align*}
F_{n,k}(\lambda) = \frac{1}{ \lambda p_n - \frac{1}{ \lambda p_{n+1} - \cdots \frac{1}{ \lambda p_{n+k} - G_{n+k+1}(\lambda)}}}
\end{align*}
for any $\lambda \geq 2/p_{n+k+1}$. After a certain value of $k$ we have that all the rational functions $F_{n,k}$ are defined on $\DD_0$ modulo a finite set of points where there are vertical asymptotes. Then we define
\begin{align*}
 F_n(\lambda) = \lim_{k \to \infty} F_{n,k}(\lambda)
\end{align*}
for all $\lambda$ in $\DD_0$ where this limit exits. At points where this limit does not exits, $F_n$ has vertical asymptotes.

Note that the equation we wish to solve, \eqref{eq:*} may now be written in this language as
\begin{align}
\lambda p_1 = F_2 (\lambda) \label{eq:**}.
\end{align}
In addition,  if $\lambda_*$ is a solution of \eqref{eq:**} above, we have
\begin{align}
F_{n+1}(\lambda_*) = \lambda_* p_n - \frac{1}{F_n(\lambda_*)} \label{eq:Fn:rec}
\end{align}
for $n\geq 2$. Therefore $F_n(\lambda_*) = - \eta_n$, where $\eta_n$ was defined in \eqref{eq:eta:rec} above.

We now show that $F_n(\lambda)$ is well-defined, continuous, and non-increasing for $ \lambda \in \DD(G_{n}) \cap \DD_0$. For such a fixed $\lambda$ and $n \geq 2$, we have by \eqref{eq:Gn:hard} that $\lambda p_{n+k} - G_{n+k+1}(\lambda)>0$ for any $k \geq 0$. By \eqref{eq:Gn:def} we also have $\lambda p_{n+k} - G_{n+k}(\lambda) > 0$ and recalling $G_{n+k+1}(\lambda) < G_{n+k}(\lambda)$ leads to
\begin{align}
 0<  \frac{1}{\lambda p_{n+k} - G_{n+k+1}(\lambda)}
  < \frac{1}{\lambda p_{n+k} - G_{n+k}(\lambda)}
  = G_{n+k}(\lambda). \label{eq:TO:iterate}
\end{align}
By iterating \eqref{eq:TO:iterate}, in the and recalling the definition of $F_{n,k}$ one may show that\begin{align*}
0 < F_{n,k+1}(\lambda) < F_{n,k}(\lambda) < G_n(\lambda)
\end{align*}
holds for any $k\geq 0$, and any $\lambda \in \DD(G_n)\cap \DD_0$. Due to the positivity of the $F_{n,k}$'s and using that
\begin{align*}
 F_{n,k}(\lambda)  = \frac{1}{\lambda p_n - F_{n+1,k-1}(\lambda)}
\end{align*}
we in fact additionally obtain that
$
 1/(\lambda p_n) < F_{n,k}(\lambda)
$
for all $k \geq 0$. At last, we notice that since $G_{n+k+1}(\lambda)$ is decreasing in $\lambda$, one may show that $F_{n,k}(\lambda)$ is decreasing as well. Collecting the above obtained information of $F_{n,k}$, we may hence conclude that the function $F_n(\lambda)$ is well-defined, continuous, non-increasing and satisfies the bound
\begin{align}
 \frac{1}{\lambda p_n} < F_{n}(\lambda) < G_n(\lambda) < \frac{2}{\lambda p_n} \label{eq:Fn:Gn:bound}
\end{align}
on the set $\DD(G_n) \cap \DD_0$.

In order to solve $\lambda p_1 = F_2(\lambda)$ we distinguish two cases, depending on the relative size of $\lambda_0$ and $\lambda_1$.  The direct case is $\lambda_1 \leq \lambda_0$, equivalently $p_2 \geq 4 p_1$. In this case, inserting $n=2$ in \eqref{eq:Fn:Gn:bound}
yields
\begin{align*}
 \frac{1}{\lambda p_2} < F_2 (\lambda) < G_2(\lambda)
\end{align*}
for $\lambda \geq 2/p_2 = \lambda_1$; thus $F_2$ is continuous on $\DD_0$. Due to the above inequality it is natural to check where $\lambda p_1$ intersects $1/(\lambda p_2)$ and $G_2(\lambda)$. The graph of $\lambda p_1$ intersects the graph of $1/(\lambda p_2)$ at $\lambda_0 = 1/\sqrt{p_1 p_2}$ and the graph of $G_2(\lambda)$ at $1/\sqrt{p_1 p_2 - p_1^2}$.  By the intermediate value theorem, there exists $
\frac{1}{\sqrt{p_1 p_2}} < \lambda_* < 1 / \sqrt{p_1 p_2 - p_1^2}
$
such that $\lambda_* p_1 = F_2 (\lambda_*)$. When $\lambda_1 \leq \lambda_0$ we have thus found a solution $\lambda_*$ to \eqref{eq:*}.

\begin{figure}[!h]
  \includegraphics[width=68ex]{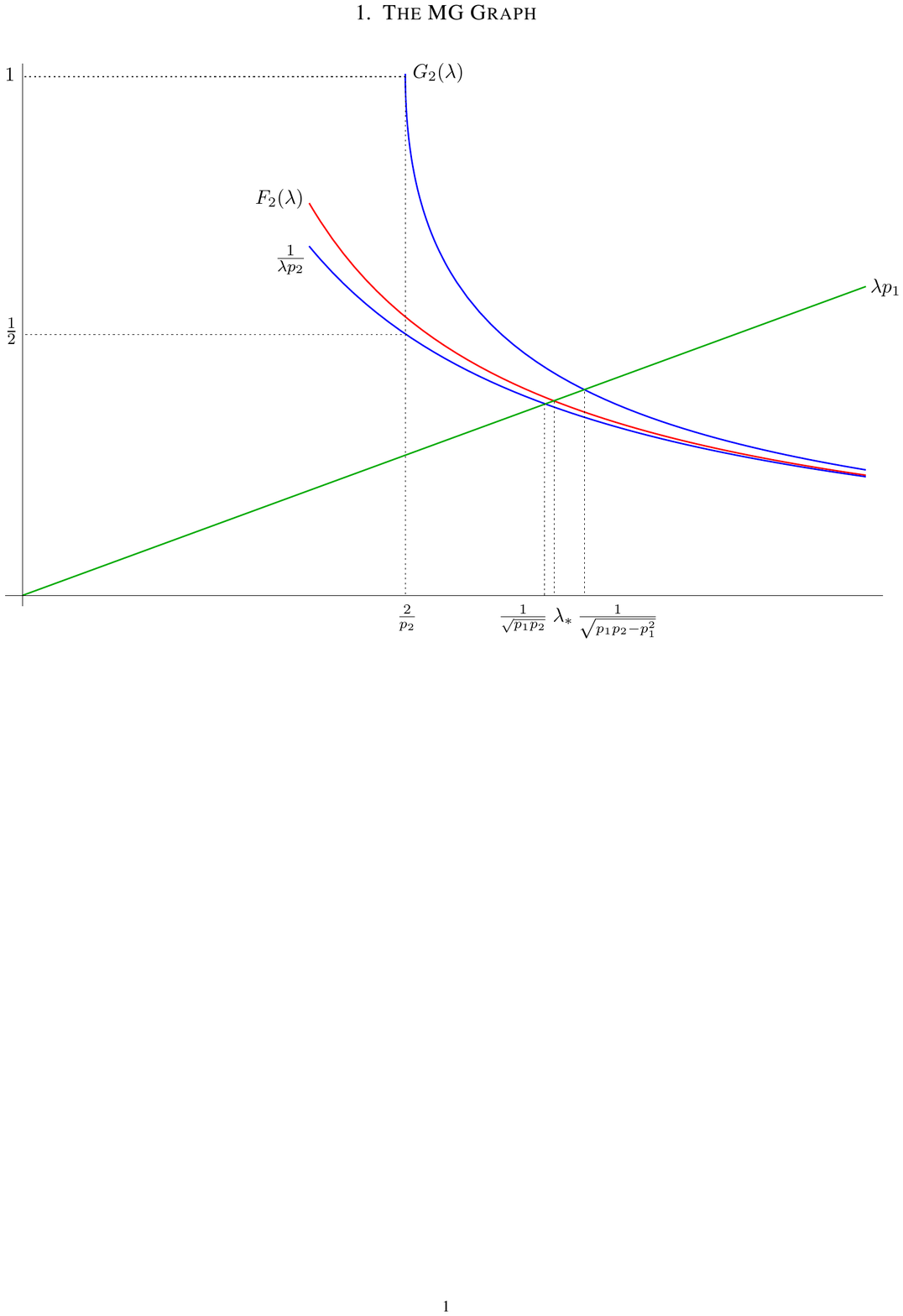}
  \caption{Solving for $\lambda_*$ when $2/p_{2} \leq 1/\sqrt{p_{1} p_{2}}$. The above plot was obtained by numerically computing $F_2$, where the coefficients $p_n \approx n^4 $ arise in the study of the MG equation.  In the numerical computation we have used the explicit formula for $p_n$ cf.~\cite{FV2} equation (2.33), and set all physical parameters to $1$.}
  \label{fig:MG}
\end{figure}

For the $\lambda_*$ found above, it is left to estimate the coefficients $|c_{n}|$ as defined in \eqref{eq:cn:def}, i.e.
\begin{align*}
 |c_n|  = p_n F_n(\lambda_*) \ldots F_2(\lambda_*).
\end{align*}
But since $\lambda_* \geq \lambda_1 = 2/p_2$, we may use the bound \eqref{eq:Fn:Gn:bound}, and therefore
\begin{align*}
 |c_n| \leq \frac{2^{n-1}}{\lambda_*^{n-1} p_{n-1} \ldots p_2 } \leq \frac{p_2^{n-1}}{p_{n-1} \ldots p_2}.
\end{align*}
Since the $p_n$s are grow unboundedly, there exits an $n_0 \geq 2$
such that $p_{n_0} \leq 2 p_2 < p_{n_0+1}$. Due to the monotonicity
of the $p_n$s, we may therefore bound
\begin{align}
|c_n| \leq p_2 \label{eq:cn:MG:low}
\end{align}
for all $n\leq n_0$ and
\begin{align}
 |c_n| \leq \frac{p_2^{n-1}}{p_{n-1} \ldots p_{n_0+1} p_2^{n_0-1}} \leq \frac{p_2^{n-n_0}}{p_{n_0+1}^{n-n_0-1}} \leq p_2 2^{n_0+1 -n}  \label{eq:cn:MG:high}
\end{align}
for all $n\geq n_0+1$, which shows that the $c_n$s eventually decay exponentially. Moreover,  from \eqref{eq:cn:MG:low} and \eqref{eq:cn:MG:high} we obtain
\begin{align*}
\| n^s c_n\|_{\ell^2(\NN)}^2 \leq n_0^{2s} \sum_{n=0}^{n_0} c_n^2 + p_2^2 \sum_{k\geq 0} 2^{-2k} \leq n_0^{2s} \| c_n\|_{\ell^2(\NN)}^2 + p_2^2  \leq C  \left( n_0^{2s} + \frac{p_2^2}{p_1^2} \right) \| c_n\|_{\ell^2(\NN)}^2,
\end{align*}
which proves \eqref{eq:Sobolev:Estimate}. Here we used that $c_1 = p_1 = p_2 (p_1/p_2)$.

The case $\lambda_0 < \lambda_1$, or equivalently $p_2 < 4 p_1$ is more involved, since it will not be possible to show in general that $F_2$ is continuous on $\DD_0 = [\lambda_0,\infty)$; we only know this on $[\lambda_1, \infty)$. To overcome this we shall estimate the largest value of $\lambda$ where $F_2$ has an asymptote, and work to the right of it. Due to \eqref{p:cond:ii} there exits $n_0 \geq 2$ such that
\begin{align*}
 \frac{2}{p_{n_0+1}} \leq \lambda_0 < \frac{2}{p_{n_0}} \leq \lambda_1.
\end{align*}
Hence, by \eqref{eq:Fn:Gn:bound} we have that $F_{n_0 + 1}(\lambda)$ is well-defined, continuous, and non-increasing on $\DD_0 = [\lambda_0, \infty)$. Moreover, by combining \eqref{eq:Fn:Gn:bound} with \eqref{eq:Gn:hard} we see that
\begin{align*}
 0 < F_{n_0+1}(\lambda) < \lambda p_{n_0}
\end{align*}
on $\DD_0$. This hence allows to define the function $F_{n_0}(\lambda)$ on all of $\DD_0$, by setting
\begin{align}
 F_{n_0}(\lambda) = \frac{1}{\lambda p_{n_0} - F_{n_0+1}(\lambda)}. \label{eq:Fn0}
\end{align}
Note that \eqref{eq:Fn:Gn:bound} only gives us that $F_{n_0}$ is continuous, positive, and non-increasing on $[2/p_{n_0},\infty)$, but in view of \eqref{eq:Fn0} we now know these properties for $F_{n_0}$ on all of $\DD_0$. Our goal is to iterate this process and define $F_{n_0-1}, \ldots, F_2$ on some large enough set. To achieve this we need to stay to the ``right'' of vertical asymptotes. We inductively define the sets
\begin{align}
A_j = \{ \lambda \in \DD_0 \colon 0 < F_{j+1}(\lambda) < \lambda p_j\} \supset [ \frac{2}{p_j},\infty)\label{eq:Aj}
\end{align}
and on $A_j$ we define the continuous, non-decreasing function
\begin{align}
 F_j(\lambda) =  \frac{1}{\lambda p_{j} - F_{j+1}(\lambda)} \label{eq:Fnj}
\end{align}
for all $j \in \{2, n_0\}$. The induction starts at $j = n_0$, a case which was described in \eqref{eq:Fn0} and the paragraph below it. Intuitively speaking, if $a_j = \inf A_j > \lambda_0$, since $F_{j+1}$ is non-increasing, as $\lambda \to a_j +$, we have that $F_j(\lambda) \to + \infty$, i.e. $a_j$ is the largest vertical asymptote of $F_j$. We observe that $A_j$ is connected (that is, an interval) since $F_{j+1}$ is monotone and continuous on $A_j$. We either have that $A_j = \DD_0$ when $a_j = \lambda_0$, or $A_j = (a_j,\infty)$, with $a_j \in (\lambda_0, 2/p_j)$. Note that the monotonicity of $F_{j}$ follows from the monotonicity of $F_{j+1}$, which holds by induction, and from \eqref{eq:Fnj}.  We also note that by construction we have $A_2 \subset \ldots \subset A_{n_0} = \DD_0$; indeed  if $\lambda \in A_{j-1}$, then $0< F_j(\lambda)< \lambda p_{j-1}$, and hence by \eqref{eq:Fnj} we must have $0 < F_{j+1}(\lambda) < \lambda p_j$, and so $\lambda \in A_j$. Hence, we finally obtain that
\begin{align*}
 A_2 = \bigcap\limits_{j=2}^{n_0} A_j
\end{align*}
is an interval which either equals $\DD_0$, or it equals $(a_2,\infty)$ for some $a_2 \in (\lambda_0, 2/p_2)$. Since $F_3(\lambda)>0$ on $A_3 \supset A_2$, by \eqref{eq:Fnj} we obtain that
\begin{align}
\frac{1}{\lambda p_2} < F_2(\lambda) \label{eq:F2:bound:lower}
\end{align}
on $A_2$. At last due to the monotonicity and continuity of $F_2$ on
$A_2$, we obtain that the graph of $\lambda p_1$ must intersect the
graph of $F_2(\lambda)$ at some $\lambda_* \in A_2$. Indeed, if $A_2
=(a_2,\infty)$, $a_2 > \lambda_0$, then $\lim_{\lambda \to a_2+}
F_2(\lambda) = + \infty$ and $\lim_{\lambda \to \infty} F_2(\lambda)
= 0$, so we obtain the desired intersection point $\lambda_* > a_2$
from the intermediate value theorem. Otherwise, if $A_2 = \DD_0 =
[\lambda_0,\infty)$, we note that $\lambda p_1 = 1/(\lambda p_2)$ at
$\lambda = \lambda_1$, and so $\lambda_*$ such that $\lambda_* p_1 =
F_2(\lambda_*)$ must exist by \eqref{eq:F2:bound:lower} and the
intermediate value theorem. In either case we have obtained a
$\lambda_* > \lambda_0$ that solves \eqref{eq:*}.  In terms of upper
bounds, either we have $\lambda_* \leq \lambda_1 = 2/p_2$, or else
we use that $ \lambda_* \in [2/p_2,\infty)= \DD(G_2)$, and here we
have $F_2(\lambda) < G_2(\lambda)$ and hence $\lambda_* \leq
1/\sqrt{p_1 p_2 - p_1^2}$, the intersection point of $\lambda p_1$
with $G_2(\lambda)$. Note that the later case can only occur if $p_2
\geq 2 p_1$, and in this case we can further estimate $\lambda_*
\leq 1/p_1$. Thus, in general, we have obtained the upper bound
$\lambda_* \leq 1/\sqrt{p_1 p_2 - p_1^2}$ stated in the theorem, and
also the bound $\lambda_* \leq \max\{ 2/p_2, 1/p_1\}$.

\begin{figure}[!h]
  \includegraphics[width=68ex]{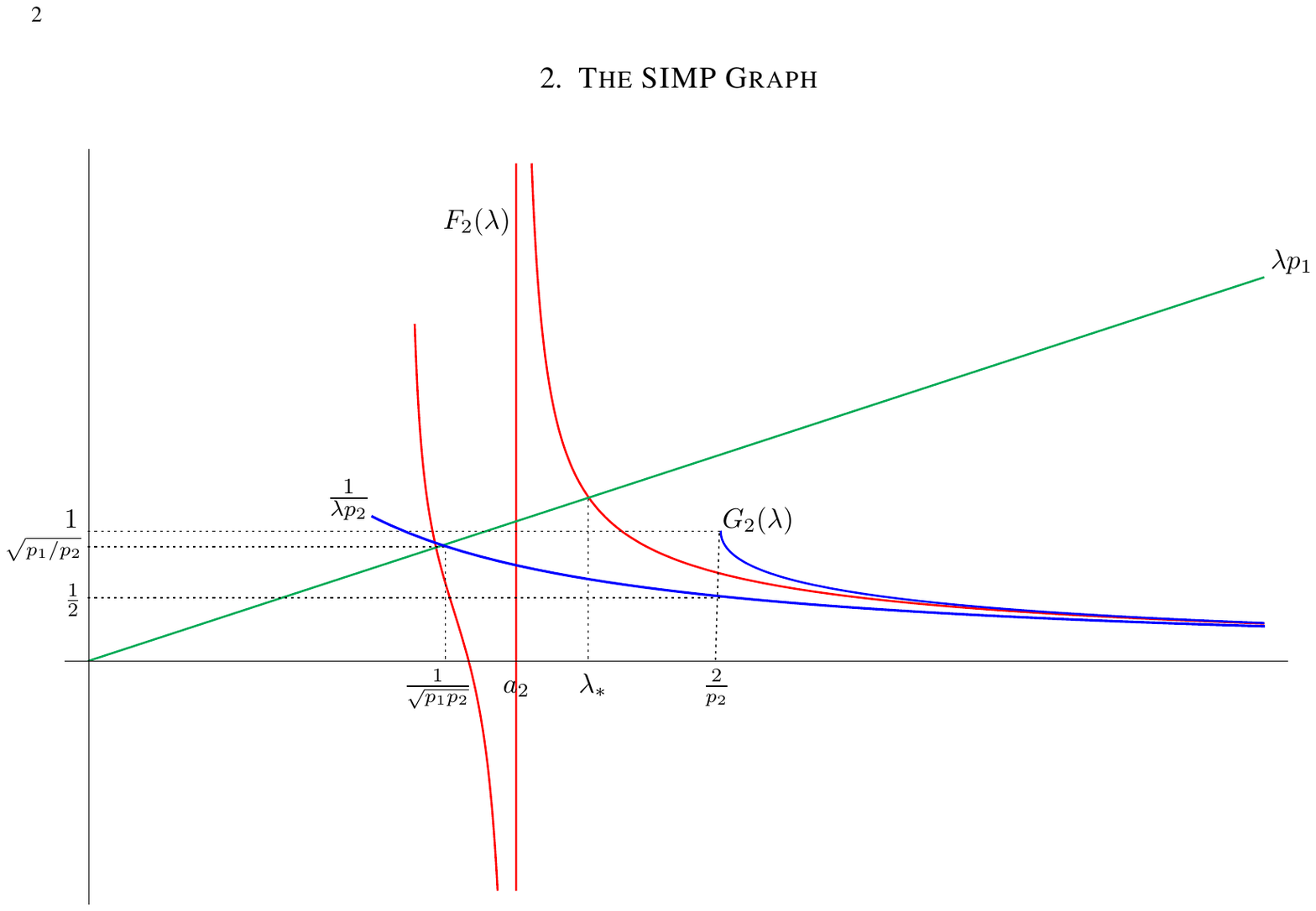}
  \caption{Solving for $\lambda_*$ when $1/\sqrt{p_{1} p_{2}} < 2/p_{2}$, and when $F_2$ has vertical asymptotes larger than $\lambda_0$. The above plot was obtained by numerically computing $F_2$, where the coefficients $p_n$ arise in the study of the SIPM equation.  In the numerical computation we have used the explicit formula for $p_n$ given in \eqref{pn}, and set all $a=b=1$, and $\beta = 1.5$.}
  \label{fig:SIMP}
\end{figure}

In summary, we have proven the existence of a solution $\lambda_*$ to \eqref{eq:*}, which satisfies the bound \eqref{eq:lambda*:bounds}.
Recalling that $F_n(\lambda_*) = -\eta_n$, as defined by \eqref{eq:eta:rec}, we obtain that the $c_n$'s defined by \eqref{eq:cn:def} satisfy
\begin{align*}
 | c_n |  =   p_n F_n(\lambda_*) \ldots F_2(\lambda_*).
\end{align*}
In order to estimate the $|c_n|$, we recall that one can find $n_0 \geq 2$ such that
\begin{align*}
p_{n_{0}} < 4 \sqrt{p_{1} p_{2}} \leq p_{n_{0}+1},
\end{align*} or equivalently $2/p_{n_0+1} \leq \lambda_0/2 < 2/p_{n_{0}}$. For this fixed $n_{0}$, using that $\lambda_{*} \in A_{j}$ for all $2 \leq j \leq n_{0}$, by \eqref{eq:Aj} we can estimate
\begin{align}
F_{n_0}(\lambda_*) \ldots F_2(\lambda_*) < \lambda_{*}^{n_{0}-1} p_{n_{0}-1} \ldots p_{2} \leq \left( \frac{4 \lambda_{*}}{\lambda_0} \right)^{n_0-1} \label{eq:F:less:n0}
\end{align}
so that we have
\begin{align}
|c_{n}| \leq 4^{n_0} \lambda_*^{n_0-1} \lambda_0^{-n_0} \leq 4^{n_0} p_2 (\lambda_* \sqrt{p_1 p_2})^{n_0-1} \leq p_2 2^{3 n_0 -1}
\label{eq:cn:SIMP:low}
\end{align}
for $n\leq n_0$, where we have also used the sharp bound $\lambda_* \leq \max\{ 2/p_2, 1/p_1\}$, and $p_2 < 4 p_1$.
Then, using that for $n \geq n_0+1$ we may use the bound \eqref{eq:Fn:Gn:bound}, from the monotonicity of the $p_n$s, and the choice of $n_0$ we obtain from \eqref{eq:F:less:n0} that
\begin{align}
|c_n| &= p_n F_{n}(\lambda_*) F_{n-1}(\lambda_*) \ldots F_{n_0+1}(\lambda_*) (4 \lambda_{*})^{n_{0}-1} \lambda_0^{1-n_0} \notag\\
& \leq p_n \frac{ 2^{n-n_0}}{\lambda_*^{n-n_0}  p_{n} \ldots p_{n_0+1}} (4 \lambda_{*})^{n_{0}-1} \lambda_0^{1-n_0} \notag\\
& \leq \frac{ 4^{n_0-1} 2^{n-n_0}}{\lambda_*^{n+1-2 n_0}  \lambda_0^{n_0-1} p_{n_0+1}^{n-n_0-1}} \leq \frac{2^{n_0}}
{\lambda_0} \left( \frac{\lambda_0}{2 \lambda_*} \right)^{n-2 n_0}  \leq \frac{2^{3 n_0 - n}}{\lambda_0} \leq p_2 2^{3n_0 -n}
\label{eq:cn:SIMP:high}
\end{align}
by recalling that by construction we have $\lambda_0 < \lambda_*$.  We obtain that the $c_n$'s decay exponentially for all $n\geq 3 n_0$. Moreover,  from \eqref{eq:cn:SIMP:low} and \eqref{eq:cn:SIMP:high} we obtain
\begin{align*}
\| n^s c_n\|_{\ell^2(\NN)}^2 \leq (3 n_0)^{2s} \sum_{n=0}^{3n_0} c_n^2 + p_2^2 \sum_{k\geq 0} 2^{-2k} \leq C n_0^{2s} \| c_n\|_{\ell^2(\NN)}^2 +2 p_2^2  \leq C  n_0^{2s} \| c_n\|_{\ell^2(\NN)}^2,
\end{align*}
which proves \eqref{eq:Sobolev:Estimate}, and concludes the proof of the theorem. Here we also used $p_2 = c_2/(p_1 \lambda_*) \leq 2 c_2$.
\end{proof}

\subsection*{Acknowledgements} SF was supported in part by the NSF grant DMS-0803268. FG was supported in part by the grant MTM2008-03754 of the MCINN(Spain), the grant StG-203138CDSIF of the ERC, and the NSF grant DMS-0901810. VV was supported in part by and AMS-Simmons travel award.

\end{document}